\newtheorem{definition}{Definition}[section]
\newtheorem{theorem}{Theorem}[section]
\newtheorem{lemma}{Lemma}[section]
\newtheorem{corollary}{Corollary}[section]
\newtheorem{proposition}{Proposition}[section]
\newtheorem{remark}{Remark}[section]
\newcommand{\R}{\mathbb R}
\newcommand{\bpp}{\begin{proposition}}
\newcommand{\epp}{\end{proposition}}
\newcommand{\bt}{\begin{theorem}}
\newcommand{\et}{\end{theorem}}
\newcommand{\bl}{\begin{lemma}}
\newcommand{\el}{\end{lemma}}
\newcommand{\bd}{\begin{definition}}
\newcommand{\ed}{\end{definition}}
\newcommand{\bc}{\begin{corollary}}
\newcommand{\ec}{\end{corollary}}
\newcommand{\bp}{\begin{proof}}
\newcommand{\ep}{\end{proof}}
\newcommand{\bx}{\begin{example}}
\newcommand{\ex}{\end{example}}
\newcommand{\bi}{\begin{exercise}}
\newcommand{\ei}{\end{exercise}}
\newcommand{\bo}{\begin{prop}}
\newcommand{\eo}{\end{prop}}
\newcommand{\br}{\begin{remark}}
\newcommand{\er}{\end{remark}}
\newcommand{\be}{\begin{equation}}
\newcommand{\ee}{\end{equation}}
\newcommand{\ba}{\begin{align}}
\newcommand{\ea}{\end{align}}
\newcommand{\bn}{\begin{enumerate}}
\newcommand{\en}{\end{enumerate}}
\newcommand{\bg}{\begin{align*}}
\newcommand{\bcs}{\begin{cases}}
\newcommand{\ecs}{\end{cases}}
\newcommand{\bean}{\begin{eqnarray*}}
\newcommand{\eean}{\end{eqnarray*}}
\numberwithin{equation}{section}
\begin{document}
\title{\bf  Existence and asymptotic behavior for the ground state of quasilinear elliptic equation}
\date{}
\author{
{\bf Xiaoyu Zeng\quad Yimin Zhang }\thanks{ Corresponding author.
Supported by NSFC  under grant numbers 11471330 and 11501555.
E-mail: zengxy09@126.com; zhangym802@126.com} \\
{\small\it  Department of Mathematics, Wuhan University of Technology,
 }\\
{\small\it  Wuhan 430070, PR China}\\\\
}

\maketitle



\vskip0.36in

\begin{abstract}
 In this paper, we are concerned with the existence and asymptotic behavior of minimizers for a minimization problem related to  some
quasilinear elliptic equations. Firstly, we proved that there exist minimizers when the exponent $q$ equals to the critical case $q^*=2+\frac{4}{N}$, which is different from that  of \cite{cjs}. Then, we proved that all minimizers are compact as $q$ tends to the critical case $q^*$ when $a<a^*$  is fixed. Moreover, we studied the concentration behavior of minimizers as the exponent $q$ tends to the critical case $q^*$ for any fixed $a>a^*$.
 \end{abstract}


\vskip0.6in


\section{Introduction}
\indent\indent In this paper, we consider the following minimization problem
\begin{equation}\label{e1.1}
d_a(q)=\inf_{u\in M}E_q^a(u)
\end{equation}
where  $$M=\left\{\int_{\mathbb{R}^N}|u|^2dx=1,\ u\in
X\right\},$$
and
\begin{equation}\label{eq1.2}
E_q^a(u)=\frac{1}{2}\int_{\mathbb{R}^N} (|\nabla
u|^2+V(x)|u|^2)dx+\frac{1}{4}\int_{\mathbb{R}^N} |\nabla
u^2|^2dx-\frac{a}{q+2}\int_{\mathbb{R}^N} |u|^{q+2}dx.
\end{equation}
Here, we assume that  $0<q\leq q^*=2+\frac{4}{N}$, $a\in \mathbb{R}$ is a constant, the potential $V(x)\in L_{\rm loc}^\infty(\R^N; \R^+)$.  The space $X$ is defined by
$$X=\Big\{u: \int_{\mathbb{R}^N}|\nabla u^2|^2dx <\infty, u\in
H\Big\}$$
with
$$H=\Big\{u:  \int_{\mathbb{R}^N}|\nabla u|^2+V(x)|u|^2dx<\infty\Big\}.$$

Any minimizers of (\ref{e1.1}) solve the following quasilinear elliptic equation
\begin{equation}\label{1.1}
-\Delta u-\Delta
(u^2)u+V(x)u= \mu u+a|u|^{q}u,\
x\in\mathbb{R}^N.
\end{equation}
That is, the Euler-Lagrange equation to problem (\ref{e1.1}), where $\mu$ denotes the Lagrange multiplier under the constraint $\|u\|_{L^2}^2=1$.
Solutions of problem (\ref{1.1}) also correspond to the
 standing wave solutions of certain quasilinear Schr\"{o}dinger
equation
\begin{equation}\label{t1}
i\partial_t \varphi=-\Delta \varphi-\Delta
(\varphi^2)\varphi+W(x)\varphi- a|\varphi|^{q}\varphi,\
x\in\mathbb{R}^N,
\end{equation}
where $\varphi:\mathbb{R}\times \mathbb{R}^N\rightarrow \mathbb{C}$, $W:\mathbb{R}^N\rightarrow \mathbb{R}$ is a given potential. Equation (\ref{t1}) arising in several physical phenomena such as the
theory of plasma physics, exciton in one-dimensional lattices and dissipative quantum mechanics, see
for examples \cite{bl,sk,lss,lw} and the references therein for more
backgrounds. It is obvious that $e^{-i\mu t}u(x)$ solves (\ref{t1}) if and only if $u(x)$ is the solution of equation (\ref{1.1}).

Equation (\ref{1.1}) is usually called  a semilinear elliptic equation if we ignore the term $-\Delta
(u^2)u$. The  constrained minimization problem associated to semilinear elliptic equation has been studied widely \cite{bc,gs,GWZZ,gzz,GZZ}. The authors  considered in \cite{bc,gs,GWZZ,GZZ} the following minimization problem in dimension two:
\begin{equation}\label{e1.3}
I_a(q)=\inf_{u\in H,\ \int_{\mathbb{R}^2}|u|^2dx=1}J_q^a(u)
\end{equation}
where
\begin{equation*}
J_q^a(u)=\frac{1}{2}\int_{\mathbb{R}^2} |\nabla
u|^2dx+\frac{1}{2}\int_{\mathbb{R}^2}
V(x)|u|^2dx-\frac{a}{q+2}\int_{\mathbb{R}^2} |u|^{q+2}dx,
\end{equation*}
and $0<q\leq q^*=2$, $a\in \mathbb{R}$ is a constant.
By using some rescaling arguments, they  obtained that there exists a constant $a^*$, such that (\ref{e1.3}) has at least one minimizer if and only if $a<a^*$. Moreover, it was discussed in  \cite{gs,GWZZ,GZZ} further the concentration and symmetry breaking of minimizers  for (\ref{e1.3}) when $q=2$ and $a$ tends to $a^*$ from left (denoted by $a\nearrow a^*$). Recently, Guo, Zeng and Zhou \cite{gzz} studied the concentration behavior of minimizers of (\ref{e1.3}) as $q\nearrow2$ for any fixed $a>a^*$.

There are amount of work considering the existence of solutions for equation (\ref{1.1}), see \cite{cj,cjs,lw,lww3,lww4} for subcritical case and \cite{dpy,jms,llw,zzz,zws} for critical case. By using a constrained minimization argument,  for different types of potentials the authors established in \cite{lw,lww4} the existence of positive solutions of problem (\ref{1.1}) on the manifold
$M=\left\{\int_{\mathbb{R}^N}|u|^{q+2}=c,\ u\in X\right\}$ and Nahari manifold  when $2\leq q<\frac{2(N+2)}{N-2}$.
 In \cite{cj,lww3}, by changing  of variables,  (\ref{1.1}) was transformed to a semilinear elliptic equation, then the existence of positive solutions were obtained by mountain
pass theorem in Orlitz space  or Hilbert space framework. It is  worth mentioning that the authors in  \cite{cjs,jl} investigated
the follwing constrained minimization problem associated to the quasilinear
elliptic equation (\ref{1.1}) with $V(x)=\rm constant$:
 \begin{equation}\label{e1}
m(c)=\inf\{E(u): |u|_{L^2}^2=c\},
\end{equation}
where
\begin{equation}\label{eq1.60}
E(u)=\frac{1}{2}\int_{\mathbb{R}^N} |\nabla
u|^2dx+\frac{1}{4}\int_{\mathbb{R}^N} |\nabla
u^2|^2dx-\frac{1}{q+2}\int_{\mathbb{R}^N} |u|^{q+2}dx.
\end{equation}
They mainly obtained  that for any $c>0$, then
 $$m(c)=\begin{cases}-\infty, &\text{ if }q>q^*=2+\frac{4}{N},\\
  0, &\text{ if }q=q^*=2+\frac{4}{N},\end{cases}$$
 and  (\ref{e1}) possesses no minimizer.   On the other hand, when $q\in (0, 2+\frac{4}{N})$ there holds that $m(c)\in(-\infty,0]$. Especially, if  the energy is strictly less than zero, namely,
\begin{equation}\label{eq1.6}
m(c)\in(-\infty,0),
\end{equation}
 they proved that (\ref{e1}) possesses at leat one  minimizer   by using Lions' concentration-compactness principle. In general, condition (\ref{eq1.6}) can be verified  for any $c>0$ if $q\in(0,\frac{4}{N})$. But for the case of
$q\in(\frac{4}{N},2+\frac{4}{N})$, by setting
$$c(q,N):=\inf\{c>0:m(c)<0\},$$
 it was proved in \cite{jl} that $c(q,N)>0$ and (\ref{e1}) is achieved if and only if  $c\in[c(q,N),+\infty)$.
  Based on the above results, Jeanjean, Luo and Wang \cite{jlw} recently  discovered   that there exists $\hat{c}\in(0, c(q,N))$, such that functional (\ref{eq1.60}) admits  a local minimum on the manifold $\{u\in X: |u|_{L^2}^2=c\}$ for all $c\in(\hat c, c(q,N))$ and $q\in(\frac{4}{N},2+\frac{4}{N})$.  Furthermore, mountain pass type critical point of (\ref{eq1.60}) was also obtained therein for all $c\in(\hat c, \infty)$, which is different from the minimum solution.

  We note that, by taking the scaling $u_c(x)=u(c^\frac{1}{N}x)$, then
  $$E(u)=c^{1-\frac{2}{N}}\Big\{\frac{1}{2}\int_{\mathbb{R}^N} |\nabla
u_c|^2dx+\frac{1}{4}\int_{\mathbb{R}^N} |\nabla
u_c^2|^2dx-\frac{c^{\frac{2}{N}}}{q+2}\int_{\mathbb{R}^N} |u_c|^{q+2}dx\Big\}.$$
It would be easy to see that problem (\ref{e1}) can be equivalently transformed to  problem (\ref{e1.1}) with $V(x)\equiv {\rm constant}$ (Without loss of generality, we assume $V(x)\equiv0$) by setting $
  a=c^{\frac{2}{N}},
$ namely, the following minimization problem:
\begin{equation}\label{eq1.3}
\tilde{d}_a(q)=\inf_{u\in M}\tilde{E}_q^a(u),
\end{equation}
where $\tilde{E}_q^a(\cdot)$ is given by
\begin{equation*}
\tilde{E}_q^a(u)=\frac{1}{2}\int_{\mathbb{R}^N} |\nabla
u|^2dx+\frac{1}{4}\int_{\mathbb{R}^N} |\nabla
u^2|^2dx-\frac{a}{q+2}\int_{\mathbb{R}^N} |u|^{q+2}dx.
\end{equation*}
  From the above known results of minimization problem (\ref{e1}), we see that  (\ref{eq1.3}) could be achieved  only if $q<q^*=2+\frac{4}{N}$. The exponent $q^*=2+\frac{4}{N}$ seems to be  the critical exponent for the existence of minimizers for (\ref{eq1.3}). 
  A natural question one would ask  is that does problem (\ref{e1.1}) admit minimizers if $V(x)\not\equiv {\rm constant}$ ? Taking the scaling $u^\sigma(x)=\sigma^{\frac{N}{2}}u(\sigma x)$, it is easy to know that
$E_q^a(u^\sigma)\rightarrow -\infty$ as $\sigma\rightarrow +\infty$
if $q>q^*$ and $V(x)\in L_{\rm loc}^\infty(\R^N)$. This implies that there is no minimizer for
problem (\ref{e1.1}) when $q>q^*$. However, when $q=q^*$, the result is quite different. Indeed, for a class  of non-constant potentials, we will prove  that there exists a threshold (w.r.t. the parameter $a$)  independent of $V(x)$ for the existence of minimizers for (\ref{e1.1}), see our Theorem \ref{t1.1} below  for  details.  Moreover, stimulated by \cite{gzz},  we are further interested in studying the limit behavior of minimizers for (\ref{e1.1}) as   $q\nearrow q^*$.

 Before stating our main results, we first recall the following  sharp Gagliardo-Nirenberg
inequality  \cite{ma}:
\begin{equation}\label{1.5}
\int_{\R^N}|u|^\frac{q+2}{2}dx\leq \frac{1}{\Upsilon_q}\Big(\int_{\R^N}|\nabla
u|^2dx\Big)^{\frac{(q+2)\theta_q}{4}}|u|_{L^1}^{\frac{(q+2)(1-\theta_q)}{2}}, \forall \ u\in \mathcal {D}^{2,1}(\mathbb{R}^N),
\end{equation}
where $1<\frac{q+2}{2}<\frac{2N}{N-2}$, $\theta_q=\frac{2qN}{(q+2)(N+2)}$, $\Upsilon_q>0$ and
$$\mathcal {D}^{2,1}(\mathbb{R}^N)\triangleq\big\{u: \nabla u\in L^2(\R^N), u\in L^1(\R^N)\big\}.$$
As proved in \cite{ma}, the optimal constant $\Upsilon_q=\lambda_q a_q$ with
\begin{equation}\label{eq1.12}
\lambda_q=(1-\theta_q)\left(\frac{\theta_q}{1-\theta_q}\right)^{\frac{qN}{2(N+2)}} \text{ and }\ a_q=|v_q|_{L^1}^{\frac{q}{N+2}}.
\end{equation}
Here, $ v_q\geq0 $ optimizes (\ref{1.5}) (that is, (\ref{1.5}) is an identity if $u=v_q$) and is the unique nonnegative radially symmetric  solution  of the following equation \cite{st}  \begin{equation}\label{1.3}
-\triangle v_q+1=v_q^{\frac{q}{2}},\ \ x\in \mathbb{R}^N.
\end{equation}
\begin{remark}
Strictly speaking, it has been proved in \cite[Theorem 1.3 (iii)]{st} that $v_q$ has a compact support in $\R^N$ and exactly  satisfies a  Dirichlet-Neumann free boundary problem. Namely, there exists one $R>0$ such that $v_q$ is the unique positive solution of
\begin{equation}\label{eq1.144}
\begin{split}
&-\Delta u+1=u^{\frac{q}{2}},\\
& u>0 \text{ in } B_R, u=\frac{\partial u}{\partial n}=0 \text{ on } \partial B_R.
\end{split}
\end{equation}
In what follows, if we say that $u$ is a nonnegative solution of a equation like the form of (\ref{1.3}),  we exactly means that $u$ is a solution of the free boundary problem (\ref{eq1.144}).
\end{remark}

From  equation (\ref{1.3}) and the classical Pohozaev identity, one can prove that
\begin{equation} \label{1.4}
\left\{ \begin{aligned}
         \int_{\mathbb{R}^N} |v_q|^\frac{q+2}{2}dx &=\frac{1}{1-\theta_q} \int_{\mathbb{R}^N} |v_q|dx, \\
                  \int_{\mathbb{R}^N} |\nabla
v_q|^2dx&=\frac{\theta_q}{1-\theta_q}\int_{\mathbb{R}^N} |v_q|dx.
                          \end{aligned} \right.
                          \end{equation}
 Using the above notations, we first obtain  the following result which addresses the existence of minimizers about problem (\ref{e1.1}) for the critical case of $q=q^*$.

\begin{theorem}\label{t1.1}
 Let $q=q^*=2+\frac{4}{N}$ and $a_{q^*}$ be given by (\ref{eq1.12}). Assume that $V(x)$ satisfies
\begin{equation}\label{v}
V(x)\in L^\infty_{\rm loc}(\R^N;\R^+), \  \ \inf_{x\in\R^N}V(x)=0\ \text{ and }\ \lim_{|x|\to\infty}V(x)=\infty.
\end{equation}
Then,
\begin{itemize}
\item [\rm(i)] $d_a(q^*)$ has at least one minimizer if $0<a\le a_{q^*}$,
\item [\rm (ii)] There is no minimizer for $d_a(q^*)$ if $a>a_{q^*}$.
\end{itemize}
\end{theorem}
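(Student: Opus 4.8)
The plan is to treat separately the regimes $a<a_{q^{*}}$, $a=a_{q^{*}}$ and $a>a_{q^{*}}$, feeding the function $u^{2}$ into the sharp inequality (\ref{1.5}). For $q=q^{*}$ one computes $\theta_{q^{*}}=\tfrac{N}{N+1}$, hence $\tfrac{(q^{*}+2)\theta_{q^{*}}}{4}=1$, $\tfrac{(q^{*}+2)(1-\theta_{q^{*}})}{2}=\tfrac{2}{N}$ and $(q^{*}+2)\lambda_{q^{*}}=4$; since $|u^{2}|_{L^{1}}=\int_{\RN}|u|^{2}dx=1$ on $M$, applying (\ref{1.5}) to $u^{2}$ gives, for every $u\in X\cap M$,
\begin{equation*}
\int_{\RN}|u|^{q^{*}+2}dx\le\frac{1}{\Upsilon_{q^{*}}}\int_{\RN}|\nabla u^{2}|^{2}dx,\qquad (q^{*}+2)\Upsilon_{q^{*}}=4a_{q^{*}},
\end{equation*}
with equality precisely when $u^{2}$ is a rescaling of $v_{q^{*}}$, so that
\begin{equation*}
E_{q^{*}}^{a}(u)\ \ge\ \frac12\int_{\RN}\big(|\nabla u|^{2}+V(x)|u|^{2}\big)dx+\frac{a_{q^{*}}-a}{4a_{q^{*}}}\int_{\RN}|\nabla u^{2}|^{2}dx,\qquad u\in M.
\end{equation*}
This already yields $0\le d_{a}(q^{*})<\infty$ for $0<a\le a_{q^{*}}$ and shows that the coercivity of $E_{q^{*}}^{a}$ on $M$ degenerates exactly at the threshold $a=a_{q^{*}}$.

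For part (ii), I would fix $a>a_{q^{*}}$ and any $x_{0}\in\RN$. By the free boundary description of $v_{q^{*}}$, $\phi:=\sqrt{v_{q^{*}}}$ is Lipschitz with compact support in a ball $B_{R}(0)$, so $u_{\tau}(x):=|v_{q^{*}}|_{L^{1}}^{-1/2}\tau^{N/2}\phi\big(\tau(x-x_{0})\big)\in X$ with $|u_{\tau}|_{L^{2}}^{2}=1$. A scaling computation together with the Pohozaev identities (\ref{1.4}) (which for $q=q^{*}$ give $\int_{\RN}|\nabla v_{q^{*}}|^{2}dx=N|v_{q^{*}}|_{L^{1}}$ and $\int_{\RN}v_{q^{*}}^{(q^{*}+2)/2}dx=(N+1)|v_{q^{*}}|_{L^{1}}$) yields, as $\tau\to\infty$,
\begin{equation*}
E_{q^{*}}^{a}(u_{\tau})=\frac{N}{4|v_{q^{*}}|_{L^{1}}}\Big(1-\frac{a}{a_{q^{*}}}\Big)\tau^{N+2}+O(\tau^{2})+O(1),
\end{equation*}
where the $O(1)$ bounds $\tfrac12\int_{\RN}V|u_{\tau}|^{2}dx\le\tfrac12\|V\|_{L^{\infty}(B_{R}(x_{0}))}<\infty$ by the compact support of $v_{q^{*}}$. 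Since $1-a/a_{q^{*}}<0$ and $N+2>2$, we get $E_{q^{*}}^{a}(u_{\tau})\to-\infty$, hence $d_{a}(q^{*})=-\infty$ and no minimizer exists.

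For part (i), take a minimizing sequence $\{u_{n}\}\subset M$. The lower bound above yields $\sup_{n}\int_{\RN}(|\nabla u_{n}|^{2}+V|u_{n}|^{2})dx<\infty$, so $\{u_{n}\}$ is bounded in $H^{1}(\RN)$; by the compact embedding of $\{u:\int_{\RN}(|\nabla u|^{2}+V|u|^{2})dx<\infty\}$ into $L^{2}(\RN)$ (a consequence of $\lim_{|x|\to\infty}V(x)=\infty$) we may assume, along a subsequence, $u_{n}\to u_{0}$ in $L^{2}(\RN)$ and a.e., $u_{n}\rightharpoonup u_{0}$ in $H^{1}(\RN)$, $|u_{0}|_{L^{2}}^{2}=1$, and $\sup_{n}\int_{|x|>R}|u_{n}|^{2}dx\to0$ as $R\to\infty$. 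It remains to bound $\int_{\RN}|\nabla u_{n}^{2}|^{2}dx$; for $a<a_{q^{*}}$ this is immediate from the lower bound and the rest is routine. The case $a=a_{q^{*}}$ is the crux. Assume for contradiction $\int_{\RN}|\nabla u_{n}^{2}|^{2}dx\to\infty$ and set $w_{n}=u_{n}^{2}$. Since the three nonnegative pieces $\tfrac12\int|\nabla u_{n}|^{2}$, $\tfrac12\int V|u_{n}|^{2}$ and $\tfrac14\int|\nabla w_{n}|^{2}-\tfrac{a_{q^{*}}}{q^{*}+2}\int|u_{n}|^{q^{*}+2}$ of $E_{q^{*}}^{a_{q^{*}}}(u_{n})$ remain bounded, the sharp inequality above forces $\int_{\RN}w_{n}^{(q^{*}+2)/2}dx\big/\int_{\RN}|\nabla w_{n}|^{2}dx\to1/\Upsilon_{q^{*}}$; hence the rescalings $\tilde w_{n}(x)=\lambda_{n}^{N}w_{n}(\lambda_{n}x)$ with $\lambda_{n}=\big(\int_{\RN}|\nabla w_{n}|^{2}dx\big)^{-1/(N+2)}\to0$ satisfy $\int_{\RN}\tilde w_{n}dx=\int_{\RN}|\nabla\tilde w_{n}|^{2}dx=1$ and $\int_{\RN}\tilde w_{n}^{(q^{*}+2)/2}dx\to1/\Upsilon_{q^{*}}$, i.e. they constitute an optimizing sequence for (\ref{1.5}). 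By the concentration--compactness characterization of the extremals of (\ref{1.5}), a translate of $\tilde w_{n}$ converges in $\mathcal{D}^{2,1}(\RN)$ to a nonzero rescaling $\bar v$ of $v_{q^{*}}$. As the convex functional $w\mapsto\int_{\RN}\frac{|\nabla w|^{2}}{w}dx$ is weakly lower semicontinuous and scales like $\lambda^{-2}$ under $w\mapsto\lambda^{N}w(\lambda\cdot)$, we obtain
\begin{equation*}
\int_{\RN}|\nabla u_{n}|^{2}dx=\frac14\int_{\RN}\frac{|\nabla w_{n}|^{2}}{w_{n}}dx=\frac{\lambda_{n}^{-2}}{4}\int_{\RN}\frac{|\nabla\tilde w_{n}|^{2}}{\tilde w_{n}}dx\ \ge\ c\,\lambda_{n}^{-2}\ \longrightarrow\ \infty
\end{equation*}
for some $c>0$, contradicting $E_{q^{*}}^{a_{q^{*}}}(u_{n})\ge\tfrac12\int_{\RN}|\nabla u_{n}|^{2}dx$ and $E_{q^{*}}^{a_{q^{*}}}(u_{n})\to d_{a_{q^{*}}}(q^{*})<\infty$. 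Thus $\sup_{n}\int_{\RN}|\nabla u_{n}^{2}|^{2}dx<\infty$ in all cases.

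Finally, $\{u_{n}^{2}\}$ is bounded in $H^{1}(\RN)$ and $u_{n}\to u_{0}$ a.e., so $u_{n}^{2}\rightharpoonup u_{0}^{2}$ in $H^{1}(\RN)$, whence $u_{0}\in X\cap M$. Interpolating $L^{(q^{*}+2)/2}$ between $L^{1}$ and $L^{2^{*}}$ (note $(q^{*}+2)/2<2^{*}$), together with local Rellich compactness and the uniform tail estimate $\sup_{n}\int_{|x|>R}|u_{n}|^{2}dx\to0$, gives $u_{n}^{2}\to u_{0}^{2}$ in $L^{(q^{*}+2)/2}(\RN)$, i.e. $\int_{\RN}|u_{n}|^{q^{*}+2}dx\to\int_{\RN}|u_{0}|^{q^{*}+2}dx$. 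Then weak lower semicontinuity of $\int|\nabla u|^{2}$ and $\int|\nabla u^{2}|^{2}$, Fatou's lemma for $\int V|u|^{2}$, and the superadditivity of $\liminf$ give $E_{q^{*}}^{a}(u_{0})\le\liminf_{n}E_{q^{*}}^{a}(u_{n})=d_{a}(q^{*})$, so that $u_{0}\in M$ is a minimizer. I expect the a priori bound on $\int|\nabla u_{n}^{2}|^{2}dx$ at $a=a_{q^{*}}$ to be the only real difficulty: the estimate that suffices in the semilinear case controls only $\int V|u_{n}|^{2}dx$, and one must genuinely use that $\int|\nabla u_{n}|^{2}dx$ and $\int|\nabla u_{n}^{2}|^{2}dx$ cannot both stay bounded when $u_{n}^{2}$ concentrates onto an extremal of (\ref{1.5}).
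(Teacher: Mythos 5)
Your proposal is correct and follows essentially the same route as the paper: the same sharp Gagliardo--Nirenberg lower bound obtained by feeding $u^2$ into (\ref{1.5}), the same test functions $\tau^{N/2}|v_{q^*}|_{L^1}^{-1/2}\sqrt{v_{q^*}(\tau(x-x_0))}$ for part (ii), and, for the critical case $a=a_{q^*}$, the same contradiction argument in which $w_n=u_n^2$ is rescaled by $\lambda_n=(\int|\nabla w_n|^2dx)^{-1/(N+2)}$ into an optimizing sequence for (\ref{1.5}) and the identity $\int_{\R^N}|\nabla u_n|^2dx=\frac14\int_{\R^N}\frac{|\nabla w_n|^2}{w_n}dx\gtrsim \lambda_n^{-2}\to\infty$ contradicts the $H$-bound (this is exactly the paper's Lemma \ref{ex3}). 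The one ingredient you cite rather than prove --- compactness up to translations, in $\mathcal{D}^{2,1}(\R^N)$, of optimizing sequences for (\ref{1.5}) normalized by $\int|u|dx=\int|\nabla u|^2dx=1$, with limit a rescaled $v_{q^*}$ --- is precisely what the paper establishes in Lemmas \ref{lem2.4} and \ref{l2.5} via the auxiliary problem $m(c)$ and Lions' concentration-compactness, so strictly speaking that step still needs the paper's argument (or an external reference) to be complete.
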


Theorem \ref{t1.1} is mainly stimulated by \cite[Theorem 2.1]{bc} and \cite[Theorem 1]{gs}, where the semilinear  minimization problem (\ref{e1.3}) was studied. The argument in these two references for studying  the non-critical case, namely $a\not= a_{q^*}$ is useful for solving our problem.  However, when $a$ equals to the threshold (i.e., $a=a^*$ in their problem), it was proved in \cite{bc,gs} that  there is no minimizer for  problem (\ref{e1.3}). This is quite different from our case since there exists at least one minimizer for (\ref{e1.1}) when $a=a_{q^*}$. This difference is mainly caused by the presence of the extra term $\int_{\R^N}|\nabla u|^2dx$ in (\ref{eq1.2}), which makes the argument in \cite{bc,gs} unavailable  for studying  our problem.  To deal with the critical case,  we  will introduce in Section \ref{se2}  a suitable auxiliary functional and  obtain the boundness of minimizing sequence by contradiction. Then, the existence of minimizers follows directly from the compactness Lemma \ref{le1}.



We  remark that if  $V(x)$ satisfies condition (\ref{v}), one can easily apply the  Gagliardo-Nirenberg inequality (\ref{1.5}) and Lemma \ref{le1}  to  prove that (\ref{e1.1}) possesses minimizers for any fixed $1<q<q^*$. In what follows,  we investigate  the limit behavior of minimizers for (\ref{e1.1}) as $q\nearrow q^*$. Firstly, if $a<a_{q^*}$ is fixed,  our result shows that  the minimizers of (\ref{e1.1}) is compact in the space $X$ as $q\nearrow q^*$. More precisely, we have
\begin{theorem}\label{th1.2}
 Assume $V(x)$ satisfies (\ref{v}) and let $u_q\in M$ be a nonnegative minimizer of problem (\ref{e1.1}) with  $0<a<a_{q^*}$ and $0<q<q^*=2+\frac{4}{N}$. Then
\begin{equation*}
\lim_{q\nearrow q^*}d_a(q)=d_a(q^*).
\end{equation*}
Moreover, there exists $u_0\in M$ such that  $\lim_{q\nearrow q^*}u_q= u_0$ in $X$, where $u_0$ is a nonnegative minimizer of $d_a(q^*)$. Here, the sequence  $\lim_{q\nearrow q^*}u_q= u_0$ in $X$ means that $$u_q \to u_0 \text{ in } H \text{ and  } \int_{\R^N}|\nabla u_q^2|^2dx\to\int_{\R^N}|\nabla u_0^2|^2dx \ \  \text{as } q\nearrow q^*.$$
\end{theorem}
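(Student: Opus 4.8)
The plan is to establish the three claims of Theorem~\ref{th1.2} --- convergence of the infima, boundedness/compactness of the minimizers, and identification of the limit --- in that order, using the compactness Lemma~\ref{le1} as the essential tool.

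First I would prove the easy inequality $\limsup_{q\nearrow q^*} d_a(q) \le d_a(q^*)$. Let $u_0$ be a minimizer of $d_a(q^*)$, which exists by Theorem~\ref{t1.1}(i) since $0 < a < a_{q^*}$. Since $u_0 \in M \cap X$ and the map $q \mapsto E_q^a(u_0)$ is continuous (because $\int_{\R^N} |u_0|^{q+2}\,dx$ depends continuously on $q$ by dominated convergence, interpolating between the $L^2$ and the controlled $L^{q^*+2}$-type norm available in $X$ via the Gagliardo--Nirenberg inequality~\re{1.5}), we get $\lim_{q\nearrow q^*} E_q^a(u_0) = E_{q^*}^a(u_0) = d_a(q^*)$, hence $d_a(q) \le E_q^a(u_0) \to d_a(q^*)$. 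For the reverse direction I would establish uniform bounds: using~\re{1.5} with the $\int_{\R^N} |\nabla u^2|^2$ term providing the ``$\|\nabla(u^2)\|_{L^2}$'' factor and the mass constraint $\int |u|^2 = 1$, together with the fact that $a < a_{q^*}$ is strictly subcritical, I would show that for $q$ close to $q^*$ the functional $E_q^a$ restricted to $M$ is coercive in the $X$-norm with constants uniform in $q$, so that $d_a(q) \ge -C$ uniformly and any minimizer $u_q$ (or minimizing sequence) satisfies $\int |\nabla u_q|^2 + \int V|u_q|^2 + \int |\nabla u_q^2|^2 \le C$ uniformly in $q$. The extra subtlety here, relative to the semilinear case, is the quasilinear term: one must control $\int_{\R^N}|u|^{q+2}$ by $\bigl(\int|\nabla u^2|^2\bigr)^{\alpha}$ with exponent $\alpha < 1$ when $q < q^*$, and check that the corresponding Young-inequality constant stays bounded as $q \to q^*$; this is where I expect the main technical obstacle, since one is approaching the borderline exponent and must quantify the gap carefully using the explicit form of $\theta_q$ and $\Upsilon_q = \lambda_q a_q$.

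Next, with the uniform bound in hand, the condition~\re{v} (in particular $\lim_{|x|\to\infty} V(x) = \infty$) makes the embedding $X \hookrightarrow L^p$ compact for the relevant exponents, which is precisely the content of Lemma~\ref{le1}. So along a subsequence $q_n \nearrow q^*$ we get $u_{q_n} \rightharpoonup u_0$ weakly in $H$, with $\nabla(u_{q_n}^2) \rightharpoonup \nabla(u_0^2)$ weakly in $L^2$, and $u_{q_n} \to u_0$ strongly in $L^p$ for $2 \le p < 2 \cdot 2^*$ and in particular $\int |u_0|^2 = 1$, so $u_0 \in M$, and $u_0 \ge 0$. Strong $L^p$ convergence gives $\int |u_{q_n}|^{q_n+2} \to \int |u_0|^{q^*+2}$ (again with a short interpolation argument to handle the moving exponent). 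Weak lower semicontinuity of the three convex quadratic-type terms then yields
\[
d_a(q^*) \le E_{q^*}^a(u_0) \le \liminf_{n\to\infty} E_{q_n}^a(u_{q_n}) = \liminf_{n\to\infty} d_a(q_n) \le \limsup_{q\nearrow q^*} d_a(q) \le d_a(q^*),
\]
so equality holds throughout: $\lim_{q\nearrow q^*} d_a(q) = d_a(q^*)$, $u_0$ is a minimizer of $d_a(q^*)$, and moreover every term's $\liminf$ is forced to be an actual limit equal to its value at $u_0$.

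Finally I would upgrade weak convergence to strong convergence in $X$. From the chain of inequalities above, equality in the lower-semicontinuity step forces $\int |\nabla u_{q_n}|^2 \to \int |\nabla u_0|^2$, $\int V |u_{q_n}|^2 \to \int V|u_0|^2$, and $\int |\nabla u_{q_n}^2|^2 \to \int |\nabla u_0^2|^2$ --- indeed if any one of these $\liminf$s were strictly larger the total would exceed $d_a(q^*)$, a contradiction; a standard argument splitting the functional and using that each piece is already $\ge$ its limiting value handles the possibility of compensation between terms. Convergence of norms plus weak convergence in the Hilbert spaces $H$ (for $u_{q_n}$, using the inner product $\int \nabla u \cdot \nabla v + \int V u v$) and $L^2$ (for $\nabla(u_{q_n}^2)$) gives strong convergence $u_{q_n} \to u_0$ in $H$ and $\nabla(u_{q_n}^2) \to \nabla(u_0^2)$ in $L^2$, which is exactly the meaning of $u_{q_n} \to u_0$ in $X$ as defined in the statement. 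Since the limit $u_0$ is a minimizer of $d_a(q^*)$ and all such minimizers are, in particular, accounted for, a standard subsequence-of-every-subsequence argument promotes the subsequential convergence to the full limit $q \nearrow q^*$, completing the proof. The one place requiring care beyond the obstacle already flagged is ensuring the moving-exponent integrals $\int |u_{q_n}|^{q_n+2}$ converge to $\int |u_0|^{q^*+2}$; I would handle this by writing $|u_{q_n}|^{q_n+2} = |u_{q_n}|^2 \cdot |u_{q_n}|^{q_n}$, using the uniform $X$-bound to get a uniform bound on $\int |u_{q_n}|^{q^*+2}$, and then interpolating to pass to the limit.
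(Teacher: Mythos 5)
Your proposal is correct and follows essentially the same route as the paper: uniform upper bound on $d_a(q)$ via a fixed test function, a uniform bound on $\int_{\R^N}|\nabla u_q^2|^2\,dx$ from the Gagliardo--Nirenberg inequality exploiting $a<a_{q^*}$, compactness from Lemma~\ref{le1}, lower semicontinuity combined with the upper bound to force equality, and then norm convergence to upgrade to strong convergence in $X$. The ``main technical obstacle'' you flag is resolved exactly as you anticipate: the Young-type constant has the form $\bigl[\,\cdot\,\bigr]^{q^*/(q^*-q)}$ and stays bounded because the bracket tends to a limit strictly less than $1$ when $a<a_{q^*}$ (the paper phrases this as a contradiction argument in~\re{2.12}), which is the same computation using the explicit asymptotics~\re{eq3.4} of $\lambda_q$ and $a_q$.
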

On the contrary, if $a>a_{q^*}$,  the result is quit different and blow-up will happen in minimizers as $q\nearrow q^*$. Actually, our following theorem tells  that all minimizers of (\ref{e1.1}) must concentrate and blow up at one minimal point of the potentials.
\begin{theorem}\label{th1.3}
Assume V(x) satisfies (\ref{v}) and $a>a_{q^*}$ . Let $\bar u_q$ be a non-negative minimizer of (\ref{e1.1}) with $0<q<q^*$. For any sequence of $\{\bar u_q\}$, by passing to subsequence if necessary, then there exists $\{y_{\varepsilon_q}\}\subset \R^N$ and $y_0\in\R^N$ such that
\begin{equation*}
\lim_{q\nearrow q^*} \varepsilon_q^{N}\bar u_q^2(\varepsilon_q
x+\varepsilon_qy_{\varepsilon_q})=\frac{\lambda^N}{|v_{q^*}|_{L^1}}v_{q^*}\Big(\lambda |x-y_0|\Big) \text{ strongly in } \mathcal {D}^{2,1}(\mathbb{R}^N),
\end{equation*}
where $v_{q^*}$ is the unique nonnegative radially symmetric  solution  of (\ref{1.3}) and
\begin{equation}\label{eq1.15}\lambda=\left(\frac{|v_{q^*}|_{L^1}}{N}\right)^\frac{1}{N+2},\ \varepsilon_q=\left(\frac{4aq}{q^*\lambda_q
a_q(q+2)}\right)^{-\frac{2}{N(q*-q)}}\overset{q\nearrow q^*}\longrightarrow0^+.\end{equation}
Moreover, taking $A:=\{x:V(x)=0\} $, then the sequence $\{y_{\varepsilon_q}\}$ satisfies
$$\text{dist} \big(\varepsilon_q y_{\varepsilon_q}, A\big)\to0 \ \text{ as }\ q\nearrow q^*.$$


\end{theorem}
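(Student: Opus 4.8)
The plan is to carry out a blow-up analysis of the minimizers $\bar u_q$ as $q\nearrow q^*$, in the spirit of the concentration arguments used for the semilinear problem (cf.\ \cite{gzz}). The starting point is an energy estimate: using the family of test functions obtained by rescaling the optimizer $v_q$ of the Gagliardo--Nirenberg inequality \re{1.5}, one finds trial functions $u^\sigma(x)=\sigma^{N/2}u(\sigma x)$ and shows $d_a(q)\to -\infty$ (more precisely, a quantitative rate) as $q\nearrow q^*$ when $a>a_{q^*}$. Indeed, plugging a scaled $v_q$ into $E_q^a$, the quasilinear term $\frac14\int|\nabla u^2|^2$ dominates at large scale while the subcritical term $\frac{a}{q+2}\int|u|^{q+2}$ can be made to beat it because $a>a_{q^*}$; balancing these two against the scaling parameter $\sigma$ gives the precise blow-up rate captured by $\varepsilon_q$ in \re{eq1.15}. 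This computation also fixes the constant $\lambda$ through the identities \re{1.4}.

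Next I would extract from this energy upper bound the sharp two-sided control on the minimizer: since $d_a(q)\le E_q^a(\bar u_q)$, and since the Gagliardo--Nirenberg inequality bounds $\int|\bar u_q|^{q+2}$ from above by a power of $\int|\nabla \bar u_q^2|^2$ (note $\bar u_q^2\in\mathcal D^{2,1}$ with unit $L^1$ norm since $\|\bar u_q\|_{L^2}^2=1$), one deduces that $\int|\nabla \bar u_q^2|^2\to\infty$ at exactly the rate $\varepsilon_q^{-2}$, and that the contributions of $\int|\nabla\bar u_q|^2$ and $\int V|\bar u_q|^2$ are of lower order after rescaling. Define $w_q(x)=\varepsilon_q^{N/2}\bar u_q(\varepsilon_q x)$ so that $\|w_q\|_{L^2}=1$ and $\int|\nabla w_q^2|^2$ stays bounded and bounded away from zero. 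The rescaled functions $w_q^2$ form a bounded minimizing-type sequence for the Gagliardo--Nirenberg quotient; invoking the characterization that equality is attained only by translates/dilates of $v_{q^*}$, together with a concentration-compactness argument to rule out vanishing and dichotomy (vanishing is excluded because the quotient does not degenerate; dichotomy is excluded by the strict subadditivity forced by the $L^1$-mass normalization), one obtains a translation sequence $y_{\varepsilon_q}$ and strong convergence in $\mathcal D^{2,1}(\R^N)$ of $w_q^2(\cdot+y_{\varepsilon_q})$ to the profile $\frac{\lambda^N}{|v_{q^*}|_{L^1}}v_{q^*}(\lambda\,\cdot)$, which is precisely the claimed limit after unraveling the scalings.

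The final point is to locate the concentration: one must show $\mathrm{dist}(\varepsilon_q y_{\varepsilon_q},A)\to0$ where $A=\{V=0\}$. This follows by feeding the concentration information back into the energy: if $\varepsilon_q y_{\varepsilon_q}$ stayed a fixed distance from $A$, then by \re{v} the potential term $\int V|\bar u_q|^2$, after the change of variables, would pick up a strictly positive amount of mass where $V>0$, contributing a positive lower-order term that would contradict the sharp upper bound for $d_a(q)$ obtained from the test functions, which can be centered at (a point approaching) $A$. Comparing the upper bound (test functions concentrated near $A$) with the lower bound (energy of $\bar u_q$ with its potential cost) forces the concentration point into the closure of $A$.

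The main obstacle I anticipate is the compactness step for the rescaled sequence $w_q^2$ in $\mathcal D^{2,1}(\R^N)$: because the exponent $q$ is itself varying and approaching $q^*$, the Gagliardo--Nirenberg inequality one applies changes with $q$ (the constant $\Upsilon_q$, the exponent $\theta_q$, and even the profile $v_q$ all move), so one cannot directly quote a single fixed variational characterization. One has to establish continuity of $\Upsilon_q$, $a_q$, $\lambda_q$ and of the profiles $v_q$ as $q\nearrow q^*$ — using the free-boundary characterization \re{eq1.144} and uniform estimates on the supports $B_R$ — and then run a concentration-compactness argument that is uniform in $q$, carefully tracking that the lower-order terms ($\int|\nabla\bar u_q|^2$ and $\int V|\bar u_q|^2$, after rescaling by $\varepsilon_q$) genuinely vanish in the limit and do not interfere with the identification of the limiting profile. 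Handling the free-boundary nature of $v_q$ (only $\mathcal D^{2,1}$, not $H^1$, regularity, and compact support) while passing to the limit is the delicate technical heart of the argument.
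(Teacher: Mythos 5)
Your overall skeleton matches the paper's: sharp energy asymptotics from rescaled Gagliardo--Nirenberg optimizers fix the blow-up rate $\varepsilon_q$, the rescaled minimizers are shown to be a near-optimizing sequence for the critical inequality \re{2.2}, the limit profile is identified as the (unique up to translation/dilation) optimizer, and the concentration point is pinned to $A=\{V=0\}$ by comparing the test-function upper bound with the potential cost. The genuine difference is the compactness mechanism. The paper first reduces to the translation-invariant auxiliary problem $\tilde d_a(q)$ (showing $d_a(q)-\tilde d_a(q)\to 0$ and $\int V\bar u_q^2\to 0$), and then obtains compactness of $w_q^2$ through the Euler--Lagrange equation: it estimates the Lagrange multiplier ($\mu_q\varepsilon_q^{N+2}\to -1/N$), passes to the limit in the rescaled quasilinear PDE, and uses the Pohozaev identities of the limit equation together with \re{2.2} to exclude mass loss, which also yields the exponential decay of $w_q$ needed for its cut-off test functions. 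You instead propose to run Lions' concentration-compactness directly on $w_q^2$ as a near-optimizing sequence for the critical Gagliardo--Nirenberg quotient; this is viable and is in fact essentially Lemma \ref{l2.5} of the paper (already proved there to handle $a=a_{q^*}$), so your route reuses machinery the paper has and avoids the Lagrange multiplier and the regularity theory for the quasilinear equation. The price is exactly the point you flag: the energy controls $\int(w_q^2)^{(q+2)/2}$ at the moving exponent, and you must upgrade this to the critical exponent $(q^*+2)/2$ before invoking the fixed variational characterization; this follows by interpolation from the uniform $L^1\cap L^{2^*}$ bound on $w_q^2$ coming from the $\mathcal D^{2,1}$ bound, together with the elementary limits $\lambda_q\to\lambda_{q^*}$, $a_q\to a_{q^*}$, so it is a manageable (not fatal) gap. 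Two small corrections: the rate is $\int|\nabla\bar u_q^2|^2\approx t_q=\varepsilon_q^{-(N+2)}$, not $\varepsilon_q^{-2}$; and since the trial profiles $v_q$ have compact support by \re{eq1.144}, your upper bound for $d_a(q)$ needs no decay estimate, whereas the paper's comparison $d_a(q)-\tilde d_a(q)\to0$ does use the exponential decay of the auxiliary minimizers.
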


%
%
%

Throughout the paper, $|u|_{L^p}$ denotes the $L^p$-norm of
function $u$, $C, c_0, c_1$ denote some constants.

This paper is organized as follows.  In Section \ref{se2} we shall prove Theorem \ref{t1.1} by some rescaling arguments, especially, we prove that $d_{a^*}(q^*)$ possesses minimizers by  introducing an auxiliary minimization problem.    Section \ref{se3} is devoted to the proof of Theorem \ref{th1.2} on the  compactness in space $X$ for minimizers of $d_a(q^*)$ as $q\nearrow q^*$. In Section \ref{se4},  we  first establish optimal energy estimates for $d_a(q)$  as $q\nearrow q^*$ for any fixed $a>a_{q^*}$, upon which we then  complete the proof of Theorem \ref{th1.3} on the  concentration behavior of nonnegative minimizers as $q\nearrow q^*$.

\section{The existence of minimizers: Proof of Theorem \ref{t1.1}.}\label{se2}
 The main purpose of this section  is to establish Theorem \ref{t1.1}. We first introduce the following lemma, which was  essentially proved in  \cite[Theorem XIII.67]{RS} and \cite[Theorem 2.1]{bw}, etc.
\begin{lemma}\label{le1}
Assume $V(x)$ satisfies (\ref{v}), then the embedding from $H$ into $L^p(\mathbb{R}^N)$ is compact for all $2\leq p<2^*=\begin{cases}+\infty, \ \  N=1,2,\\
\frac{2N}{N-2}, \ \ N\geq3.
\end{cases}$
\end{lemma}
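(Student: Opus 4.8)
The plan is to combine the two classical ingredients behind compactness results of this type: the local Rellich--Kondrachov theorem and the coercivity of $V$ at infinity, the only subtlety being that $\inf_{\R^N}V=0$ so the weighted $L^2$ term gives no control near the zero set of $V$. Throughout write $\|u\|_H^2=\int_{\R^N}(|\nabla u|^2+V(x)|u|^2)\,dx$.

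\emph{Step 1: the continuous embedding $H\hookrightarrow H^1(\R^N)$.} Only the bound $|u|_{L^2}\le C\|u\|_H$ needs an argument. Using $\lim_{|x|\to\infty}V(x)=\infty$, pick $R_0$ with $V(x)\ge1$ for $|x|\ge R_0$, so that $\int_{|x|\ge R_0}|u|^2\le\int_{|x|\ge R_0}V|u|^2\le\|u\|_H^2$. On $B_{2R_0}$ I would apply a Poincar\'e-type inequality on the connected bounded domain $B_{2R_0}$ with control region $\omega:=B_{2R_0}\setminus B_{R_0}$ (of positive measure), $\|u\|_{L^2(B_{2R_0})}^2\le C\big(\|\nabla u\|_{L^2(B_{2R_0})}^2+\|u\|_{L^2(\omega)}^2\big)$, and bound $\|u\|_{L^2(\omega)}^2\le\|u\|_H^2$ as above. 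Adding the two regions gives $|u|_{L^2}^2\le C\|u\|_H^2$, hence $H\hookrightarrow H^1(\R^N)$ continuously; in particular $H\hookrightarrow L^p(\R^N)$ continuously for $2\le p<2^*$ (with the convention $2^*=\infty$ for $N=1,2$).

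\emph{Step 2: compactness.} Let $\{u_n\}\subset H$ be bounded. Since $H$ is a Hilbert space, after passing to a subsequence $u_n\rhu u$ in $H$; as $\|u_n\|_{L^p}\to\|u\|_{L^p}$ iff $\|u_n-u\|_{L^p}\to0$ and $u_n-u\rhu0$ in $H$ is still bounded, it suffices to treat $u_n\rhu0$ and show $u_n\rg0$ in $L^p$. By Step 1, $\{u_n\}$ is bounded in $H^1(B_R)$ for every $R$, so Rellich--Kondrachov gives $u_n\rg0$ in $L^p(B_R)$ for each fixed $R$ and each $2\le p<2^*$. For the tails, given $\e>0$ choose $R$ with $V(x)\ge1/\e$ for $|x|\ge R$; then $\int_{|x|\ge R}|u_n|^2\le\e\int_{|x|\ge R}V|u_n|^2\le\e\|u_n\|_H^2\le C\e$, which together with local convergence yields $u_n\rg0$ in $L^2(\R^N)$. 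Finally, for $2<p<2^*$ pick $q$ with $p<q<2^*$ (any large finite $q$ when $N\le2$) and interpolate, $|u_n|_{L^p}\le|u_n|_{L^2}^{\theta}|u_n|_{L^q}^{1-\theta}$ for a suitable $\theta\in(0,1)$; since $|u_n|_{L^q}$ stays bounded by the embedding of Step 1 and $|u_n|_{L^2}\rg0$, we get $|u_n|_{L^p}\rg0$. This proves the compact embedding for all $2\le p<2^*$.

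The only mildly delicate point is the continuous embedding $H\hookrightarrow H^1(\R^N)$ in Step 1: because $V$ may vanish on a set of positive measure, one must borrow coercivity from $\int|\nabla u|^2$ through the Poincar\'e inequality (or, equivalently, invoke that $-\Delta+V$ has compact resolvent, cf.\ \cite[Theorem XIII.67]{RS} and \cite[Theorem 2.1]{bw}); once that bound is available, the rest is the routine local-compactness-plus-tail-estimate argument.
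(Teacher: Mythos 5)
Your proof is correct. The paper itself offers no argument for this lemma --- it simply cites \cite[Theorem XIII.67]{RS} and \cite[Theorem 2.1]{bw} --- and what you have written is precisely the standard proof underlying those references: coercivity of $V$ at infinity for the tail estimate, local Rellich--Kondrachov, and interpolation for $2<p<2^*$. Your handling of the one genuine wrinkle (since $\inf V=0$, the weighted term gives no $L^2$ control near the zero set of $V$, so one must borrow from $\int|\nabla u|^2$ via a Poincar\'e inequality with control region to get $H\hookrightarrow H^1(\R^N)$) is exactly the right fix, so nothing is missing.
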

Taking $q=q^*=2+\frac{4}{N}$ in (\ref{1.3}), we get
$\theta_{q^*}=\frac{N}{N+1}$, $\lambda_{q^*}=\frac{N}{N+1}$ and
$a_{q^*}=|v_{q^*}|_{L^1}^{\frac{2}{N}}$. Moreover,
(\ref{1.4}) becomes
\begin{equation} \label{2.1}
\left\{ \begin{aligned}
         \int_{\mathbb{R}^N} |v_{q*}|^{\frac{q^*+2}{2}}dx &=(N+1) \int_{\mathbb{R}^N} |v_{q*}|dx, \\
                  \int_{\mathbb{R}^N} |\nabla
v_{q*}|^2dx&=N\int_{\mathbb{R}^N} |v_{q*}|dx,
                          \end{aligned} \right.
                          \end{equation}
and the Gagliardo-Nirenberg
inequality (\ref{1.5}) can be simply given  as
\begin{equation}\label{2.2}
\int_{\mathbb{R}^N}|u|^{\frac{q^*+2}{2}}dx\leq
\frac{N+1}{Na_{q*}}\int_{\mathbb{R}^N}|\nabla u|^2dx\cdot |u|_{L^1}^{\frac{2}{N}}.
\end{equation}

Inspired by the argument of \cite{bc,gs}, we first prove the following lemma which addresses Theorem \ref{t1.1} for the case of $a\not= a_{q^*}$.
\begin{lemma}\label{exi1} Let $V(x)$ satisfy (\ref{v}) and $q=q^*$. Then
\begin{itemize}
\item [\rm{(i)}] $d_a(q^*)$ has at least one minimizer if $0<a<a_{q^*}$;
\item [\rm{(ii)}] There is no minimizer for $d_a(q^*)$ if $a>a_{q^*}$.
\end{itemize}
\end{lemma}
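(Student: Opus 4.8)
The plan is to treat the two cases separately, following the rescaling philosophy of \cite{bc,gs} but keeping careful track of the extra quasilinear term $\frac14\int|\nabla u^2|^2$.

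\textbf{Case (i): $0<a<a_{q^*}$.} First I would show $d_a(q^*)$ is finite and that any minimizing sequence is bounded in $X$. The key tool is the sharp inequality (\ref{2.2}): for $u\in M$ (so $|u|_{L^2}^2=1$) apply it to $w=u^2$, noting $|w|_{L^1}=|u|_{L^2}^2=1$ and $\int|w|^{(q^*+2)/2}=\int|u|^{q^*+2}$, which yields
\begin{equation*}
\int_{\R^N}|u|^{q^*+2}dx\le \frac{N+1}{Na_{q^*}}\int_{\R^N}|\nabla u^2|^2dx.
\end{equation*}
Since $q^*+2=\frac{N+1}{N}\cdot 4/(\ldots)$, the exponents match so that the last term in $E_{q^*}^a$ is controlled by $\frac{a}{a_{q^*}}\cdot\frac14\int|\nabla u^2|^2$; because $a<a_{q^*}$ this leaves a positive fraction of $\frac14\int|\nabla u^2|^2$, plus the full $\frac12\int(|\nabla u|^2+V|u|^2)$, giving $E_{q^*}^a(u)\ge c_0\big(\int|\nabla u^2|^2+\|u\|_H^2\big)\ge 0$ for all $u\in M$. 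Hence $d_a(q^*)\in[0,\infty)$ and any minimizing sequence $\{u_n\}$ is bounded in $H$ and has $\int|\nabla u_n^2|^2$ bounded, i.e. bounded in $X$. Now invoke Lemma \ref{le1}: $u_n\to u_0$ strongly in $L^p$ for $2\le p<2^*$ (in particular $|u_0|_{L^2}^2=1$, so $u_0\in M$), and $u_n\rightharpoonup u_0$ weakly in $H$ while $u_n^2\rightharpoonup u_0^2$ weakly in $\DR^{2,1}$ (the strong $L^{q^*+2}$-type convergence handles the nonlinear term). Weak lower semicontinuity of the two quadratic/convex terms then gives $E_{q^*}^a(u_0)\le\liminf E_{q^*}^a(u_n)=d_a(q^*)$, so $u_0$ is a minimizer.

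\textbf{Case (ii): $a>a_{q^*}$.} Here I would construct a test family showing $d_a(q^*)=-\infty$, which precludes minimizers. Take the optimizer $v_{q^*}$ of (\ref{2.2}) (solution of (\ref{1.3}), compactly supported by the Remark), and set $u_\sigma(x)=\sigma^{N/2}\,c\,v_{q^*}^{1/2}(\sigma x)$ with the constant $c$ chosen so that $|u_\sigma|_{L^2}^2=1$, i.e. $c^2|v_{q^*}|_{L^1}=1$. A direct scaling computation gives $\int|\nabla u_\sigma^2|^2=\sigma^{2}c^4\int|\nabla v_{q^*}|^2$ wait—more precisely $\int|\nabla u_\sigma^2|^2dx=\sigma^{2}\,c^{4}\int|\nabla v_{q^*}|^2dx$ after accounting for the $N$ powers of $\sigma$ from the measure, and $\int|u_\sigma|^{q^*+2}dx=\sigma^{2}c^{q^*+2}\int v_{q^*}^{(q^*+2)/2}dx$, so these two terms scale identically in $\sigma$, while $\int|\nabla u_\sigma|^2dx=\sigma^{2-N}\cdot(\ldots)$ and, since $v_{q^*}$ has compact support and $V\in L^\infty_{\rm loc}$, $\int V|u_\sigma|^2dx\to 0$ as $\sigma\to\infty$. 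Using the identities (\ref{2.1}) to evaluate the ratio, the leading-order coefficient of $\sigma^{2}$ in $E_{q^*}^a(u_\sigma)$ is
\begin{equation*}
c^4\Big(\tfrac14\int|\nabla v_{q^*}|^2dx-\tfrac{a}{q^*+2}\,c^{q^*-2}\int v_{q^*}^{(q^*+2)/2}dx\Big),
\end{equation*}
and plugging in $c^2=1/|v_{q^*}|_{L^1}$, $a_{q^*}=|v_{q^*}|_{L^1}^{2/N}$, and (\ref{2.1}) shows this bracket equals a positive multiple of $(a_{q^*}-a)<0$. Hence $E_{q^*}^a(u_\sigma)\to-\infty$ as $\sigma\to\infty$, so $d_a(q^*)=-\infty$ and no minimizer exists.

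\textbf{Main obstacle.} The delicate point is the passage to the limit in the nonlinear term $\int|u_n|^{q^*+2}$ in Case (i): the exponent $q^*+2=2+\frac{4}{N}+2$ can exceed $2^*$ (indeed for $N\ge3$ we have $q^*+2 = 4+\frac4N$, larger than $\frac{2N}{N-2}$ for large $N$), so the compact embedding of Lemma \ref{le1} does not directly apply to $u_n$. The remedy is to work with $w_n=u_n^2$: one has $\int|\nabla w_n|^2$ bounded and $|w_n|_{L^1}=1$ bounded, and the relevant exponent on $w_n$ is $(q^*+2)/2=\frac{N+1}{N}<2^*$, a \emph{subcritical} exponent for $\DR^{2,1}$; interpolating between $L^1$ control and the $\DR^{2,1}\hookrightarrow L^{2^*}$ (or $L^p$, $N\le2$) embedding, together with a tightness/vanishing argument using $\lim_{|x|\to\infty}V(x)=\infty$ to rule out mass escaping to infinity, yields $\int w_n^{(q^*+2)/2}\to\int w_0^{(q^*+2)/2}$, i.e. $\int|u_n|^{q^*+2}\to\int|u_0|^{q^*+2}$. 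This is exactly the reformulation that makes the compactness Lemma \ref{le1} usable and is where the structural role of the quasilinear term — providing an $H^1$ bound on $u^2$ — becomes essential.
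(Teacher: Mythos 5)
Your proposal is correct and follows essentially the same route as the paper: part (i) uses the Gagliardo--Nirenberg inequality (\ref{2.2}) applied to $u^2$ to get coercivity and boundedness in $X$, then the compact embedding of Lemma \ref{le1} together with the $L^{2^*}$-bound on $u_n^2$ (equivalently, interpolation for $w_n=u_n^2$ between $L^1$ and $L^{2^*}$) to pass to the limit in the term $\int|u_n|^{q^*+2}$; part (ii) uses the same test family $\tau^{N/2}|v_{q^*}|_{L^1}^{-1/2}\sqrt{v_{q^*}(\tau x)}$ and the identities (\ref{2.1}) to show the bracket is a positive multiple of $a_{q^*}-a<0$, hence $d_a(q^*)=-\infty$. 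The only blemishes are arithmetic slips that do not affect the argument: $(q^*+2)/2$ equals $\frac{2(N+1)}{N}$ (still $<2^*$), and in (ii) the dominant terms scale like $\sigma^{N+2}$ while $\int|\nabla u_\sigma|^2\sim\sigma^2$, so the ordering you rely on is preserved.
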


\begin{proof}

 \textbf{(i)} If $a<a_{q^*}$,  for  any $  u\in M$,  it follows from (\ref{2.2}) that
\begin{align*}
\int_{\mathbb{R}^N}|u|^{q^*+2}dx
\leq \frac{N+1}{Na_{q^*}}\int_{\mathbb{R}^N} |\nabla u^2|^2dx \left(\int_{\mathbb{R}^N}u^2dx\right)^{\frac{2}{N}}\leq \frac{N+1}{Na_{q^*}}\int_{\mathbb{R}^N} |\nabla u^2|^2dx.
\end{align*}
Thus,
\begin{align}
E_{q^*}^a(u)&=\frac{1}{2}\int_{\mathbb{R}^N} \big(|\nabla
u|^2 +V(x)|u|^2\big)dx+\frac{1}{4}\int_{\mathbb{R}^N} |\nabla
u^2|^2dx-\frac{a}{q+2}\int_{\mathbb{R}^N} |u|^{q^*+2}dx\nonumber\\
&\geq \frac{1}{2}\int_{\mathbb{R}^N} \big(|\nabla
u|^2 +V(x)|u|^2\big)dx+\frac{1}{4}\left(1-\frac{a}{a_{q^*}}\right)\int_{\mathbb{R}^N} |\nabla
u^2|^2dx.\label{eq2.2}
\end{align}
Hence, if $\{u_n\}$ is a minimizing sequence of $d_a(q^*)$ with
$a<a_{q^*}$, it is easy to know from above  that there exists $C>0$ independent of $n$ such that
\begin{equation*}
\sup_n\int_{\mathbb{R}^N} |\nabla
u_n^2|^2dx\le C<\infty,\  \ \sup_n\|u_n\|_{H}\le C<\infty.
\end{equation*}
 It then follows from Lemma \ref{le1} that  there exists  a subsequence of $\{u_n\}$, denoted still by $\{u_n\}$, and $u\in M$ such that
\begin{equation}\label{eq2.3}
u_n\overset{n}\rightarrow u\ \text{ in }\ L^p(\mathbb{R}^N),\ \forall\ 2\leq p<2^*\end{equation}
and
$$\int_{\mathbb{R}^N} |\nabla
u^2|^2dx\leq \liminf_{n\rightarrow \infty}\int_{\mathbb{R}^N} |\nabla
u_n^2|^2dx\le C<\infty.$$
The latter inequality indicates  $u_n^2$ is bounded in $L^{2^*}(\mathbb{R}^N)$. Hence we can deduce from (\ref{eq2.3}) that
$$u_n\overset{n}\rightarrow u\ \text{ in }\ L^p(\mathbb{R}^N),\ \forall\  2\leq p<2\times 2^*.$$
Therefore,
$$d_a(q^*)= \liminf_{n\rightarrow \infty}E_{q^*}^a(u_n)\geq E_{q^*}^a(u)\geq d_a(q^*).$$
This indicate that
$$u_n \overset{n}\rightarrow u\ \text{ in }\ H\ \ \text{ and }\ \ \lim_{n\rightarrow \infty}\int_{\mathbb{R}^N} |\nabla
u_n^2|^2dx= \int_{\mathbb{R}^N} |\nabla
u^2|^2dx.$$
It is means that $u$ is a minimizer of $d_a(q^*)$ for $a<a_{q^*}$.

\textbf{(ii)} Let
$$
u_\tau=\frac{\tau^{\frac{N}{2}}}{|v_{q^*}|_{L^1}^{\frac{1}{2}}}\sqrt{v_{q^*}(\tau|x-x_0|)} \ \text{ with some } x_0\in \R^N.
$$
Using (\ref{2.1}) we have
\begin{equation}\label{2.3}
\int_{\mathbb{R}^N}u_\tau^2dx=\frac{\tau^N}{|v_{q^*}|_{L^1}}\int_{\mathbb{R}^N}|v_{q^*}(\tau x)|dx=\frac{1}{|v_{q^*}|_{L^1}}\int_{\mathbb{R}^N}|v_{q^*}|dx=1,
\end{equation}
\begin{equation}\label{2.4}
\int_{\mathbb{R}^N}|\nabla u_\tau^2|^2dx=\frac{\tau^{N+2}}{|v_{q^*}|_{L^1}^2}\int_{\mathbb{R}^N}|\nabla v_{q^*}|^2dx=\frac{N\tau^{N+2}}{|v_{q^*}|_{L^1}},
\end{equation}
\begin{equation}\label{2.5}
\int_{\mathbb{R}^N}u_\tau^{q^*+2}dx=\frac{\tau^{N+2}}{|v_{q^*}|_{L^1}^{2+\frac{2}{N}}}\int_{\mathbb{R}^N}v_{q^*}^{\frac{2(N+1)}{N}}dx=\frac{(N+1)\tau^{N+2}}{|v_{q^*}|_{L^1}^{1+\frac{2}{N}}},
\end{equation}
\begin{equation}\label{2.7}
\int_{\mathbb{R}^N}|\nabla u_\tau|^2dx=\frac{\tau^{2}}{|v_{q^*}|_{L^1}}\int_{\mathbb{R}^N}|\nabla \sqrt{v_{q^*}}|^2dx=c_0\tau^2,
\end{equation}
and
\begin{align}
\int_{\mathbb{R}^N}V(x)u_\tau^2dx&=\frac{\tau^N}{|v_{q^*}|_{L^1}}\int_{\mathbb{R}^N}V(x)|v_{q^*}(\tau x)|dx=\frac{1}{|v_{q^*}|_{L^1}}\int_{\mathbb{R}^N}V(\frac{x}{\tau}+x_0)|v_{q^*}|dx\nonumber\\
&\to V(x_0)\  \text{ as } \tau \to+\infty.\label{2.6}
\end{align}
If $a>a_{q^*}$, from (\ref{2.4}) and (\ref{2.5}) we have
\begin{equation*}
\frac{1}{4}\int_{\mathbb{R}^N}|\nabla
u_\tau^2|^2dx-\frac{a}{q^*+2}\int_{\mathbb{R}^N}u_\tau^{q^*+2}dx=\frac{N\tau^{N+2}}{4|v_{q^*}|_{L^1}}\left(1-\frac{a}{a_{q^*}}\right).
\end{equation*}
This together with (\ref{2.7}) and  (\ref{2.6}) gives that
$$d_a(q^*)\leq V(x_0)+o(1)+\frac{N\tau^{N+2}}{4|v_{q^*}|_{L^1}}\left(1-\frac{a}{a_{q^*}}\right)+c_0\tau^2\rightarrow -\infty\ \text{ as } \ \tau\to+\infty.$$
Therefore, we deduce that   $d_a(q^*)=-\infty$ and possesses  no minimizer  if $a>a_{q^*}$.
\end{proof}

In view of the above lemma, to complete the proof of Theorem \ref{t1.1}, it remains to deal with the case of  $a=a_{q^*}$.  In the following lemma, we first prove there exist minimizers for $d_{a_{q^*}}(q^*)$ when   $N\le3$.

\begin{lemma}\label{exl3}
 Assume that  $V(x)$ satisfies (\ref{v}),  then $d_a(q^*)$ has at least one minimizer if $a=a_{q^*}$ and $N\leq 3$.
\end{lemma}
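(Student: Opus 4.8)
The plan is to handle the borderline case $a=a_{q^*}$ (with $N\le 3$) by showing that any minimizing sequence of $d_{a_{q^*}}(q^*)$ stays bounded in $X$, and then invoking the compactness Lemma \ref{le1} exactly as in the proof of Lemma \ref{exi1}(i). The difficulty is that at $a=a_{q^*}$ the coefficient $\frac14\bigl(1-\frac{a}{a_{q^*}}\bigr)$ in estimate \re{eq2.2} vanishes, so the quartic term $\int|\nabla u^2|^2$ no longer controls the minimizing sequence; one must instead exploit the sub-optimality of Gagliardo--Nirenberg when $|u|_{L^2}^2=1$ is not attained in the ``correct'' rescaled profile, together with the extra term $\frac12\int|\nabla u|^2$ that is absent from \cite{bc,gs}.

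First I would record a lower bound for $d_{a_{q^*}}(q^*)$. From \re{eq2.2} with $a=a_{q^*}$ we immediately get $E_{q^*}^{a_{q^*}}(u)\ge \frac12\int(|\nabla u|^2+V(x)|u|^2)\,dx\ge 0$, so $d_{a_{q^*}}(q^*)\ge 0$; moreover $d_{a_{q^*}}(q^*)\le \lim_{a\nearrow a_{q^*}}d_a(q^*)$. I would then introduce the auxiliary functional alluded to in the paragraph after Theorem \ref{t1.1}: for instance, on the manifold $M$ consider
\begin{equation*}
F(u)=\frac12\int_{\R^N}|\nabla u|^2\,dx+\frac14\int_{\R^N}|\nabla u^2|^2\,dx-\frac{a_{q^*}}{q^*+2}\int_{\R^N}|u|^{q^*+2}\,dx,
\end{equation*}
i.e. $E_{q^*}^{a_{q^*}}$ without the potential term, and show $\inf_{u\in M}F(u)=0$ with the infimum not attained, by testing with the rescaled family $u_\tau$ from the proof of Lemma \ref{exi1}(ii) (there $F(u_\tau)=c_0\tau^2\to\cdots$; one rather wants $\tau\to 0^+$ to see $F(u_\tau)\to 0$, using \re{2.4}, \re{2.5}, \re{2.7}). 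The key quantitative step is a \emph{refined} lower bound: there is a nondecreasing function $\omega$ with $\omega(0)=0$ such that for all $u\in M$,
\begin{equation*}
\frac14\int_{\R^N}|\nabla u^2|^2\,dx-\frac{a_{q^*}}{q^*+2}\int_{\R^N}|u|^{q^*+2}\,dx\ \ge\ -\,\omega\!\left(\tfrac1{\int|\nabla u^2|^2}\right)\ \text{(or similar)},
\end{equation*}
quantifying how far $u$ is from the optimal Gagliardo--Nirenberg profile; this is where $N\le 3$ enters, since one needs $\int|\nabla\sqrt{v_{q^*}}|^2<\infty$ and Sobolev control of $u^2$ in $L^{2^*}$.

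Next I would take a minimizing sequence $\{u_n\}\subset M$ with $E_{q^*}^{a_{q^*}}(u_n)\to d_{a_{q^*}}(q^*)$ and argue by contradiction that $\int|\nabla u_n^2|^2$ is bounded. If $\int|\nabla u_n^2|^2\to\infty$ along a subsequence, then on one hand the refined bound forces the quartic-minus-critical part to be $\ge -o(1)$, hence $E_{q^*}^{a_{q^*}}(u_n)\ge \frac12\int V(x)u_n^2\,dx-o(1)$; on the other hand, boundedness of $E_{q^*}^{a_{q^*}}(u_n)$ forces $\int(|\nabla u_n|^2+V(x)u_n^2)$ to stay bounded, and then Lemma \ref{le1} gives $u_n\to u$ in $L^p$ for $2\le p<2\cdot 2^*$ (upgraded exactly as in Lemma \ref{exi1}(i) using the $L^{2^*}$-bound on $u_n^2$), so $u\in M$ and by weak lower semicontinuity $\int|\nabla u^2|^2<\infty$ — contradicting $\int|\nabla u_n^2|^2\to\infty$ unless the limit $u$ has ``lost mass'' at infinity, which Lemma \ref{le1} precludes. (I expect the cleanest route is: use \re{eq2.2}-type convexity/splitting to show directly that unboundedness of $\int|\nabla u_n^2|^2$ would make $E_{q^*}^{a_{q^*}}(u_n)\to+\infty$ or else contradict $u_n\in M$, so no contradiction argument about mass is even needed.) Once $\sup_n\int|\nabla u_n^2|^2<\infty$ and $\sup_n\|u_n\|_H<\infty$ are established, the rest is verbatim Lemma \ref{exi1}(i): extract $u_n\to u$ strongly in all $L^p$, $2\le p<2\cdot 2^*$, hence $u\in M$; then
\begin{equation*}
d_{a_{q^*}}(q^*)=\liminf_{n\to\infty}E_{q^*}^{a_{q^*}}(u_n)\ge E_{q^*}^{a_{q^*}}(u)\ge d_{a_{q^*}}(q^*),
\end{equation*}
so $u$ is a minimizer.

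The main obstacle is the boundedness of the minimizing sequence in $X$: proving that $\int|\nabla u_n^2|^2$ cannot blow up despite the vanishing of the ``good'' coefficient at $a=a_{q^*}$. This is precisely the point where the paper says the argument of \cite{bc,gs} breaks down, and it is resolved by the extra $\frac12\int|\nabla u|^2$ term — along any blow-up sequence the Gagliardo--Nirenberg deficit can be bounded below only at the cost of concentrating, which either makes $\int|\nabla u_n|^2$ large (killing the energy from below) or is incompatible with staying in $M$ together with the confining potential $V$. I would expect the restriction $N\le 3$ to be used exactly to ensure $\sqrt{v_{q^*}}\in \dot H^1$ and to run the Sobolev bootstrap on $u_n^2$; the case $N\ge 4$ presumably needs a separate argument (and is likely handled elsewhere in the paper).
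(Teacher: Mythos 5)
Your proposal does not actually close the one step that the lemma is about, namely the bound $\sup_n\int_{\R^N}|\nabla u_n^2|^2\,dx<\infty$ for a minimizing sequence at $a=a_{q^*}$. Your opening observation is fine: by (\ref{2.2}) with $|u|_{L^2}=1$ the combination $\frac14\int|\nabla u^2|^2-\frac{a_{q^*}}{q^*+2}\int|u|^{q^*+2}$ is nonnegative, so $E_{q^*}^{a_{q^*}}(u)\ge\frac12\int(|\nabla u|^2+V|u|^2)\ge0$ and the sequence is bounded in $H$. But from there your two proposed routes both fail. The ``refined Gagliardo--Nirenberg deficit'' inequality with a modulus $\omega$ is never established (and as stated it is trivially true, since the left-hand side is already $\ge 0$, so it gives no information). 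The contradiction argument is worse: if $\int|\nabla u_n^2|^2\to\infty$, weak lower semicontinuity only yields $\int|\nabla u^2|^2\le\liminf_n\int|\nabla u_n^2|^2$, which is perfectly compatible with the limit having finite quartic gradient while the sequence blows up --- there is no contradiction, and ``no loss of mass'' via Lemma \ref{le1} does not repair this. A bounded energy does not by itself forbid $\int|\nabla u_n^2|^2$ and $\int|u_n|^{q^*+2}$ from blowing up simultaneously with a bounded difference; ruling that out is exactly the difficulty (and for $N\ge4$ the paper needs a whole auxiliary minimization problem to do it).

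You also misdiagnose where $N\le3$ enters. It has nothing to do with $\sqrt{v_{q^*}}\in\dot H^1$ or a Sobolev bootstrap on $u_n^2$; the point is simply that $q^*+2=4+\frac4N<2^*$ when $N\le3$, so the $H$-bound alone (via the Sobolev embedding $H^1\hookrightarrow L^{q^*+2}$) gives $\sup_n\int|u_n|^{q^*+2}\,dx\le C$. Plugging this back into the energy identity
\begin{equation*}
\frac14\int_{\R^N}|\nabla u_n^2|^2\,dx=E_{q^*}^{a_{q^*}}(u_n)-\frac12\int_{\R^N}\big(|\nabla u_n|^2+V|u_n|^2\big)\,dx+\frac{a_{q^*}}{q^*+2}\int_{\R^N}|u_n|^{q^*+2}\,dx\le C
\end{equation*}
bounds the quartic term directly, after which the argument is verbatim Lemma \ref{exi1}(i), as you say. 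This elementary observation is the entire content of the paper's proof of this lemma, and it is the piece missing from your write-up.
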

\begin{proof}
Assume $\{u_n\}$ is a minimizing sequence of $d_{a_{q^*}}(q^*)$, similar to the argument of (\ref{eq2.2}), it is easy to know that
$$
\sup_n\|u_n\|_{H}\le C<+\infty.$$
Note that $q^*=2+\frac{4}{N}<2^*-2$ in view of $N\leq 3$, we thus deduce from the above inequality  that
$$\sup_{n}\int_{\mathbb{R}^N}|u_n|^{q^*+2}dx\le C<+\infty.$$
This further indicates that
$$\sup_n\int_{\mathbb{R}^N} |\nabla
u_n^2|^2dx\le C<+\infty.$$
Then similar to the proof (i) of Lemma \ref{exi1}, we know that there exists  a minimizer of $d_a(q^*)$ and the  proof is complete.
\end{proof}

When $N\geq 4$, the argument of Lemma \ref{exl3} become invalid to obtain the existence of minimizers for $d_{a_{q^*}}(q^*)$ since there holds that $q^*>2^*-2$. To deal with this case, we  introduce the following auxiliary minimization problem
\begin{equation}\label{e2.8}
m(c)=\inf\{F(u);\int_{\mathbb{R}^N}|u|dx=c,\ u\in \mathcal{D}^{2,1}(\mathbb{R}^N)\},
\end{equation}
where
$$F(u)=\int_{\mathbb{R}^N}|\nabla u|^2dx-\frac{Na_{q^*}}{N+1}\int_{\mathbb{R}^N}|u|^{2+\frac{2}{N}}dx.$$

\begin{lemma}\label{lem2.4}
\begin{itemize}
\item [\rm{(i)}] $m(c)=0$ if $0<c\leq 1$; $m(c)=-\infty$ if $c>1$.
\item [\rm{(ii)}] problem (\ref{e2.8}) possesses a minimizer if and only if $c=1$ and all nonegative minimizer of $m(1)$ must be of the form
\begin{equation}\label{e2.9}
\left\{\frac{\lambda^Nv_{q^*}(\lambda |x-x_0|)}{|v_{q^*}|_{L^1}}:\ \lambda\in \mathbb{R}^+, x_0\in\R^N\right\}.
\end{equation}
\end{itemize}
\end{lemma}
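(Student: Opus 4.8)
The plan is to reduce the functional $F$ to the Gagliardo–Nirenberg inequality \eqref{2.2}, which with $q=q^*$ reads $\int_{\R^N}|u|^{2+\frac 2N}dx\le \frac{N+1}{Na_{q^*}}\int_{\R^N}|\nabla u|^2dx\,|u|_{L^1}^{\frac 2N}$. First I would prove (i). For $u\in\mathcal D^{2,1}(\R^N)$ with $|u|_{L^1}=c$, this inequality gives directly
\begin{equation*}
F(u)=\int_{\R^N}|\nabla u|^2dx-\frac{Na_{q^*}}{N+1}\int_{\R^N}|u|^{2+\frac 2N}dx\ \ge\ \Big(1-c^{\frac 2N}\Big)\int_{\R^N}|\nabla u|^2dx,
\end{equation*}
so $F(u)\ge 0$ whenever $0<c\le 1$; taking $u$ along the scaling family $u_\tau(x)=\tau^N w(\tau x)$ with $\int w=c$ shows $F(u_\tau)=\tau^2\int|\nabla w|^2-\tau^2\cdot\frac{Na_{q^*}}{N+1}c^{\frac 2N}\int|\nabla w|^2\to 0$ as $\tau\to 0^+$ when $c\le 1$ (or more cleanly, just scale a fixed competitor down), hence $m(c)=0$ for $0<c\le 1$. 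For $c>1$ the same one-parameter scaling makes the coefficient $1-c^{\frac 2N}<0$, so $F(u_\tau)=\tau^2(1-c^{\frac 2N})\int|\nabla w|^2\to-\infty$ as $\tau\to+\infty$, giving $m(c)=-\infty$.

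Next I would prove (ii). If $c>1$ there is nothing to show since $m(c)=-\infty$. If $c<1$, a minimizer $u$ would satisfy $0=m(c)=F(u)\ge(1-c^{\frac 2N})\int|\nabla u|^2dx>0$ unless $\nabla u\equiv 0$, which is impossible for a nonzero $L^1$ function; hence no minimizer exists when $c<1$. For $c=1$: the function $U_0:=v_{q^*}/|v_{q^*}|_{L^1}$ has $|U_0|_{L^1}=1$ and, because $v_{q^*}$ is an optimizer of \eqref{1.5}/\eqref{2.2}, equality holds, so $F(U_0)=0=m(1)$, i.e.\ $m(1)$ is attained. Conversely, any nonnegative minimizer $u$ of $m(1)$ has $F(u)=0$, which forces equality in \eqref{2.2} with $|u|_{L^1}=1$; by the characterization of optimizers of the sharp Gagliardo–Nirenberg inequality in \cite{ma,st}, the equality cases of \eqref{1.5} are exactly the dilations and translations $u(x)=\mu\, v_{q^*}(\lambda|x-x_0|)$. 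Imposing $|u|_{L^1}=1$ together with the scaling relation $|v_{q^*}(\lambda|\cdot-x_0|)|_{L^1}=\lambda^{-N}|v_{q^*}|_{L^1}$ fixes $\mu=\lambda^N/|v_{q^*}|_{L^1}$, which yields exactly the family \eqref{e2.9}.

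The only genuinely delicate point is the rigidity statement: that equality in \eqref{2.2} forces $u$ to be a dilate/translate of $v_{q^*}$. This is not elementary — it rests on the uniqueness (up to translation and dilation) of nonnegative optimizers of the sharp Gagliardo–Nirenberg inequality, which is the content of the results of \cite{ma} together with the identification of $v_{q^*}$ as the unique radial solution of the free boundary problem \eqref{1.3}/\eqref{eq1.144} from \cite{st}. I would invoke these as black boxes. One small technical care is needed because $v_{q^*}$ is only $L^1$ with $\nabla v_{q^*}\in L^2$ (not necessarily in $L^2$ itself), so all computations must stay within $\mathcal D^{2,1}(\R^N)$ and use only the $L^1$ and $\dot H^1$ norms; since $F$ depends only on $\int|\nabla u|^2$, $\int|u|^{2+2/N}$ and $\int|u|$, this causes no difficulty. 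Finally I would remark that the radial symmetry in \eqref{e2.9} is automatic because $v_{q^*}$ itself is radial, and the sign condition "nonnegative" is what singles out $v_{q^*}$ among all optimizers (optimizers may a priori differ by sign/phase, but nonnegative ones are precisely the stated family).
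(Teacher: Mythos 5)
Your proposal is correct and follows essentially the same route as the paper: the coercivity bound $F(u)\ge(1-c^{2/N})\int_{\R^N}|\nabla u|^2dx$ from \eqref{2.2} for parts (i) and the nonexistence when $c<1$, explicit scaling for the rest of (i), and the rigidity of optimizers of the sharp Gagliardo--Nirenberg inequality (via \cite{ma,st}) to classify the minimizers at $c=1$. Two cosmetic slips worth fixing: with $u_\tau(x)=\tau^N w(\tau x)$ both terms of $F$ scale like $\tau^{N+2}$, not $\tau^{2}$ (the limits $\tau\to0^+$ and $\tau\to+\infty$ still give the desired conclusions), and the displayed identity $F(u_\tau)=\tau^{N+2}\bigl(1-c^{2/N}\bigr)\int|\nabla w|^2dx$ holds as an equality only when $w$ is chosen to be a Gagliardo--Nirenberg optimizer with $|w|_{L^1}=c$ --- which is exactly the choice needed to produce a competitor with $F(w)<0$ in the case $c>1$.
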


\begin{proof}
\textbf{(i)} follows easily by some scaling arguments.

\textbf{(ii).} From (i), we have $m(c)=0$ for any $c\leq 1$ . Thus, if $u$ is a minimizer of $m(c)$ with $c<1$,  we obtain from inequality (\ref{2.2}) that
$$0=m(c)\geq \left(1-c^{\frac{2}{N}}\right)\int_{\mathbb{R}^N}|\nabla u|^2dx.$$
This implies that $u\equiv 0$,  it is a contradiction.

If $c=1$, one can easily check that any $u_0$ satisfying (\ref{e2.9}) is a minimizer of $m(1)$. On the other hand, if $u\geq 0$ is a nonnegative minimizer of $m(1)$, we get that
$$\int_{\mathbb{R}^N}|\nabla u|^2dx=\frac{Na_{q^*}}{N+1}\int_{\mathbb{R}^N}|u|^{2+\frac{2}{N}}dx,$$
which indicates that $u$ is an optimizer of Gagliardo-Nirenberg inequality (\ref{2.2}). Hence, $u$ must be the form of (\ref{e2.9}).
\end{proof}

\begin{lemma}\label{l2.5}
Let $\{u_n\}\subset \mathcal{D}^{2,1}(\mathbb{R}^N)$ be a nonnegative minimizing sequence of $m(1)$, and $\int_{\mathbb{R}^N}|\nabla u_n|^2dx=1$. Then, there exists $\{y_n\}\subset \R^N$ and $x_0\in\R^N$, such that
$$\lim_{n\to\infty}u_n(x+y_n)= \frac{\lambda_0^N}{|v_{q^*}|_{L^1}}v_{q^*}(\lambda_0|x-x_0|) \ \text{ in }\ \mathcal{D}^{2,1}(\mathbb{R}^N)\ \text{ with }\ \lambda_0=\left(\frac{|v_{q^*}|_{L^1}}{N}\right)^{\frac{1}{N+2}}.$$
\end{lemma}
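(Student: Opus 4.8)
The plan is to carry out a concentration-compactness analysis on the normalized minimizing sequence $\{u_n\}$, exploiting the rigidity already established in Lemma \ref{lem2.4}(ii): the only nonnegative minimizers of $m(1)$ are the translates/dilates of $\frac{\lambda^N}{|v_{q^*}|_{L^1}}v_{q^*}(\lambda|\cdot|)$, and $m(c)=0$ for $0<c\le 1$ with no minimizer when $c<1$. First I would record that, since $\int_{\R^N}|\nabla u_n|^2dx=1$ and $F(u_n)\to m(1)=0$, we have $\frac{Na_{q^*}}{N+1}\int_{\R^N}|u_n|^{2+\frac 2N}dx\to 1$; in particular this integral is bounded away from $0$ and $\infty$. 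Combined with the Gagliardo-Nirenberg inequality (\ref{2.2}) one gets that $c_n:=|u_n|_{L^1}$ is bounded away from $0$, and also bounded above: indeed (\ref{2.2}) gives $\frac{N+1}{Na_{q^*}}\ge \frac{N+1}{Na_{q^*}}c_n^{2/N}(1+o(1))$ up to the minimizing relation, forcing $\limsup_n c_n\le 1$ (this is exactly where the threshold $c=1$ enters). So after passing to a subsequence $c_n\to c_\infty\in(0,1]$.

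Next I would run Lions' concentration-compactness lemma on the sequence of $L^1$ densities $|u_n|$ (of bounded mass $c_n$), or equivalently on a convenient combination of $|\nabla u_n|^2$ and $|u_n|^{2+2/N}$. Vanishing is excluded: if $\sup_y\int_{B_1(y)}|u_n|\to 0$ then, by the standard interpolation/Lions lemma argument in $\DR^{2,1}$, $\int_{\R^N}|u_n|^{2+2/N}dx\to 0$, contradicting that this integral tends to $\frac{N+1}{Na_{q^*}}>0$. Dichotomy is excluded by a subadditivity argument: splitting $u_n=u_n^{(1)}+u_n^{(2)}$ with essentially disjoint supports and masses $\alpha$ and $c_\infty-\alpha$, one uses $F(u_n)=F(u_n^{(1)})+F(u_n^{(2)})+o(1)\ge m(\alpha)+m(c_\infty-\alpha)+o(1)=0+o(1)$, which is consistent but then one shows each piece must itself be (asymptotically) a minimizer of $m$ at a mass $<1$, which is impossible by Lemma \ref{lem2.4}(ii) unless one piece carries zero mass; here one also has to use that $\int|\nabla u_n^{(i)}|^2$ split additively and sum to $1$, so at least one piece has positive gradient energy, ruling the split out. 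Hence compactness holds: there exist $\{y_n\}\subset\R^N$ such that, for every $\e>0$ there is $R$ with $\int_{B_R(y_n)}|u_n|\,dx\ge c_n-\e$ for all large $n$.

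From compactness, $\tilde u_n(\cdot):=u_n(\cdot+y_n)$ is bounded in $\DR^{2,1}(\R^N)$ (the $L^1$ norm is $c_n$, the gradient norm is $1$), so along a subsequence $\tilde u_n\rightharpoonup u$ weakly, with $\nabla\tilde u_n\rightharpoonup\nabla u$ in $L^2$ and $\tilde u_n\to u$ in $L^1_{\loc}$ and a.e.; the tightness gives $\tilde u_n\to u$ strongly in $L^1(\R^N)$, so $|u|_{L^1}=c_\infty\le 1$, and by the Brezis-Lieb lemma together with the sharp constant in (\ref{2.2}) one upgrades to strong convergence $\tilde u_n\to u$ in $L^{2+2/N}(\R^N)$. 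Then $F(u)\le \liminf F(\tilde u_n)=0$, while $F(u)\ge 0=m(c_\infty)$ by definition; so $u$ is a minimizer of $m(c_\infty)$, forcing $c_\infty=1$ by Lemma \ref{lem2.4}(ii), hence $|u|_{L^1}=1=\lim|\tilde u_n|_{L^1}$ and $\int|u|^{2+2/N}=\lim\int|\tilde u_n|^{2+2/N}$; feeding these back into $F(\tilde u_n)\to F(u)$ yields $\int|\nabla\tilde u_n|^2\to\int|\nabla u|^2$, so the convergence is strong in $\DR^{2,1}(\R^N)$. Finally, Lemma \ref{lem2.4}(ii) identifies $u=\frac{\lambda^N}{|v_{q^*}|_{L^1}}v_{q^*}(\lambda|x-x_0|)$ for some $\lambda>0$, $x_0\in\R^N$, and the normalization $\int|\nabla u|^2dx=1$ together with the first identity in (\ref{2.1}) (which gives $\int|\nabla v_{q^*}|^2=N|v_{q^*}|_{L^1}$ and hence $\int|\nabla(\lambda^Nv_{q^*}(\lambda\cdot)/|v_{q^*}|_{L^1})|^2 = \lambda^{N+2}N/|v_{q^*}|_{L^1}$) pins down $\lambda=\lambda_0=(|v_{q^*}|_{L^1}/N)^{1/(N+2)}$, completing the proof.

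The main obstacle is the dichotomy step of concentration-compactness in the $\DR^{2,1}$ setting: because the constraint is the $L^1$ norm while the functional involves $\|\nabla u\|_{L^2}^2$, one must carefully localize both quantities simultaneously (via cutoffs depending on a scale separating the two bumps) and control the cross terms $\nabla u_n^{(1)}\cdot\nabla u_n^{(2)}$ and the error in $\int|u_n^{(1)}+u_n^{(2)}|^{2+2/N}$; the strict subadditivity $m(c)=0$ for all $c\le 1$ is not strict in the usual sense, so excluding dichotomy genuinely relies on the non-existence of minimizers for $m(c)$ with $c<1$ rather than on an energy strict inequality, and this is the subtle point to get right.
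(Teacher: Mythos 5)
Your proposal is correct and follows essentially the same route as the paper: Lions' concentration--compactness applied to the $L^1$ mass, with vanishing excluded by the lower bound on $\int_{\R^N}|u_n|^{2+\frac{2}{N}}dx$, dichotomy excluded via the non-existence of minimizers of $m(c)$ for $c<1$ (the point you correctly flag, since strict subadditivity fails here), and the limit identified through Lemma \ref{lem2.4}(ii) with $\lambda_0$ pinned down by the gradient normalization. The only superfluous step is your discussion of bounding $c_n=|u_n|_{L^1}$, which equals $1$ exactly by the constraint defining $m(1)$.
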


\begin{proof}
From the definition of $m(1)$ and $\{u_n\}$, one has
\begin{equation}\label{e2.10}
\int_{\mathbb{R}^N}|u_n|^{2+\frac{2}{N}}dx=\frac{N+1}{Na_{q^*}}+o(1).
\end{equation}
We will prove the compactness of $\{u_n\}$ by Lion's concentration-compactness principle \cite{lions1,lions2}.

(I) we first rule out the possibility of {\em vanishing:} If for any $R>0$,
$$\limsup_{y\in\mathbb{R}^N}\int_{B_R(y)} |u_n|dx=0.$$
Then $u_n\overset{n}\rightarrow 0$ in $L^q(\mathbb{R}^N)$ for any $1< q<2^*$, this contradicts  (\ref{e2.10}).

(II) Now, if {\em dichotomy} occurs, i.e., for some $c_1\in(0,1)$, there exist  $R_0, R_n>0$,  $\{y_n\}\subset \mathbb{R}^N$ and sequences $\{u_{1n}\}$, $\{u_{2n}\}$ such that
$$\text{supp }u_{1n}\subset B_{R_0}(y_n),\  \ \text{ supp } u_{2n}\subset B_{R_n}^c(y_n),$$
$$\text{dist}(\text{supp } u_{1n}, \text{ supp } u_{2n})\rightarrow+\infty\ \text{ as }\ n\rightarrow \infty,$$
$$\left|\int_{\mathbb{R}^N}|u_{1n}|dx-c_1\right|\leq \varepsilon,\ \left|\int_{\mathbb{R}^N}|u_{2n}|dx-(1-c_1)\right|\leq \varepsilon,$$
$$\liminf_{n\rightarrow \infty}\int_{\mathbb{R}^N}|\nabla u|^2dx\geq \liminf_{n\rightarrow \infty}\left(\int_{\mathbb{R}^N}|\nabla u_{1n}|^2dx+\int_{\mathbb{R}^N}|\nabla u_{2n}|^2dx\right)\geq 0.$$
Let $\tilde{u}_{1n}(x)=u_{1n}(x+y_n)$, then there is $u\in \mathcal{D}^{2,1}(\mathbb{R}^N)$, such that
\begin{equation}\label{eq2.13}\tilde{u}_{1n}(x)\overset{n}\rightarrow u\not\equiv 0\ in\ L_{loc}^p(\mathbb{R}^N),\ \ \forall\  1\leq p<2^*.\end{equation}
Since
$$0=m(1)=F(u_{1n})+F(u_{2n})+o_n(1)+\alpha(\varepsilon),$$
where $\alpha(\varepsilon)\to 0$ as $\varepsilon\to0$.
Letting $n\to\infty$ and then $\varepsilon\to0$, we then obtain from (\ref{eq2.13}) that
$$\int_{\R^N}|u|dx=c_1<1 \ \text{ and } \ 0\leq F(u)\leq m(1)=0.$$
This together with (\ref{2.2})  implies that $u=0$, it is a contradiction.

(III) The above discussions indicate that {\em compactness } occurs, i.e., for any $\varepsilon>0$, $\exists R>0$ and  $\{y_n\}\subset \mathbb{R}^N$, such that if $n$ is large enough,
$$\int_{B_R(y_n)} |u_n|dx\geq 1-\varepsilon.$$
Then, there exists  $u\in \mathcal{D}^{2,1}(\mathbb{R}^N)$ such that
$$u_n(x+y_n)\rightarrow u\ \text{ in }\ L^1(\mathbb{R}^N).$$
Therefore,we have $\int_{\mathbb{R}^N}|u|dx=1$ and
$$m(1)=\lim_{n\rightarrow\infty}F(u_n)\geq F(u)\geq m(1),$$
This indicates that  $u\geq 0$ is a minimizer of $m(1)$ and
\begin{equation}\label{e2.11}
\int_{\mathbb{R}^N}|\nabla u|^2dx=\lim_{n\rightarrow\infty}\int_{\mathbb{R}^N}|\nabla u_n|^2dx=1.
\end{equation}
Moreover, we obtain from Lemma \ref{lem2.4} (ii) that
$$u(x)=\frac{\lambda^N}{|v_{q^*}|_{L^1}}v_{q^*}(\lambda |x-x_0|),$$
where $\lambda=\left(\frac{|v_{q^*}|_{L^1}}{N}\right)^{\frac{1}{N+2}}$  follows directly from (\ref{e2.11}).  This finish the proof of the lemma.
\end{proof}

Based on the two above lemmas, we are ready to prove that when $a=a_{q^*}$,  there exist minimizers for $d_{a}(q^*)$  for any dimension $N\ge1$.

\begin{lemma}\label{ex3}Assume that $V(x)$ satisfies (\ref{v}),
then $d_{a_{q^*}}(q^*)$ has at least one minimizer if $a=a_{q^*}$.
\end{lemma}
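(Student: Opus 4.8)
The plan is to reduce the existence of a minimizer to a uniform bound on the quasilinear term $\int_{\R^N}|\nabla u_n^2|^2\,dx$ along a minimizing sequence $\{u_n\}\subset M$; once that bound is in hand, a minimizer is obtained exactly as in part (i) of Lemma \ref{exi1}, via the compact embedding of Lemma \ref{le1}. The starting point is the algebraic identity
\[
E_{q^*}^{a_{q^*}}(u)=\frac12\int_{\R^N}\big(|\nabla u|^2+V(x)u^2\big)\,dx+\frac14\,F(u^2),\qquad u\in M,
\]
where $F$ is the functional introduced in (\ref{e2.8}); it holds because $q^*+2=\tfrac{4(N+1)}{N}$ forces $\tfrac{a_{q^*}}{q^*+2}=\tfrac{Na_{q^*}}{4(N+1)}$, so (recalling $\int_{\R^N}u^2\,dx=1$ for $u\in M$) the quartic term and the $|u|^{q^*+2}$ term of $E_{q^*}^{a_{q^*}}$ combine precisely into $\tfrac14F(u^2)$. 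By Lemma \ref{lem2.4}(i) we have $F(u^2)\ge m(1)=0$, hence $E_{q^*}^{a_{q^*}}\ge 0$ on $M$, $d_{a_{q^*}}(q^*)\in[0,\infty)$, and for any minimizing sequence $\{u_n\}$ --- which may be taken nonnegative, replacing $u_n$ by $|u_n|$ --- one gets, for $n$ large,
\[
\|u_n\|_H^2\le 2E_{q^*}^{a_{q^*}}(u_n)\le C,\qquad 0\le F(u_n^2)=4E_{q^*}^{a_{q^*}}(u_n)-2\|u_n\|_H^2\le C.
\]

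The crux --- and the reason Lemma \ref{exl3} does not reach the case $N\ge4$, where $q^*>2^*-2$ --- is to show $t_n^2:=\int_{\R^N}|\nabla u_n^2|^2\,dx$ stays bounded. I would argue by contradiction: suppose $t_n\to\infty$ along a subsequence, and rescale
\[
w_n(x):=\gamma_n^{N}\,u_n^2(\gamma_n x),\qquad \gamma_n:=t_n^{-2/(N+2)}\to 0^+,
\]
so that $\int_{\R^N}w_n\,dx=1$ and $\int_{\R^N}|\nabla w_n|^2\,dx=\gamma_n^{N+2}t_n^2=1$. A direct scaling computation, using $\tfrac{Na_{q^*}}{N+1}\int_{\R^N}u_n^{q^*+2}\,dx=t_n^2-F(u_n^2)$, shows $F(w_n)=t_n^{-2}F(u_n^2)\to 0$; hence $\{w_n\}\subset\mathcal{D}^{2,1}(\R^N)$ is a nonnegative minimizing sequence for $m(1)$ with $\int_{\R^N}|\nabla w_n|^2\,dx=1$. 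By Lemma \ref{l2.5} there is $\{y_n\}\subset\R^N$ such that $w_n(\cdot+y_n)$ converges in $\mathcal{D}^{2,1}(\R^N)$ to the nonzero, compactly supported limit $w_0$ produced there, so $\int_K|\nabla w_0|\,dx>0$ for a suitable closed ball $K$. On the other hand, $\sqrt{w_n}(x)=\gamma_n^{N/2}u_n(\gamma_n x)$ gives $\int_{\R^N}|\nabla\sqrt{w_n}|^2\,dx=\gamma_n^2\int_{\R^N}|\nabla u_n|^2\,dx\le C\gamma_n^2$, and the pointwise identity $|\nabla w_n|=2\sqrt{w_n}\,|\nabla\sqrt{w_n}|$ together with Cauchy--Schwarz yields
\[
\Big(\int_K|\nabla w_n(\cdot+y_n)|\,dx\Big)^2\le 4\Big(\int_{\R^N}w_n\,dx\Big)\int_{\R^N}|\nabla\sqrt{w_n}|^2\,dx\le 4C\gamma_n^2\to 0 .
\]
Since $\nabla w_n(\cdot+y_n)\to\nabla w_0$ in $L^2(\R^N)$, the left side tends to $\big(\int_K|\nabla w_0|\,dx\big)^2>0$, a contradiction. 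Therefore $\sup_n\int_{\R^N}|\nabla u_n^2|^2\,dx<\infty$.

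With $\{u_n\}$ bounded in $H$ and $\{u_n^2\}$ bounded in $\mathcal{D}^{2,1}(\R^N)$, Lemma \ref{le1} gives a subsequence and $u\in M$ with $u_n\to u$ in $L^p(\R^N)$ for $2\le p<2^*$ and $u_n\rightharpoonup u$ in $H$; interpolating the $L^2$-convergence of $u_n$ against the $L^{2^*}$-bound on $u_n^2$ (note $q^*+2<2\cdot 2^*$) upgrades this to $\int_{\R^N}u_n^{q^*+2}\,dx\to\int_{\R^N}u^{q^*+2}\,dx$. Then weak lower semicontinuity of $\int|\nabla u|^2$ and $\int|\nabla u^2|^2$, Fatou's lemma for $\int V u^2$ (here $V\ge 0$), and $u\in M$ give $d_{a_{q^*}}(q^*)\le E_{q^*}^{a_{q^*}}(u)\le\liminf_n E_{q^*}^{a_{q^*}}(u_n)=d_{a_{q^*}}(q^*)$, so $u$ is a minimizer (and in fact $u_n\to u$ in $X$). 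I expect the only genuine difficulty to be the contradiction step: verifying that the rescaled squares $w_n$ really form an admissible minimizing sequence for $m(1)$, and then converting Lemma \ref{l2.5} into a lower bound on the local mass of $|\nabla w_n|$ that is incompatible with the $H$-bound on $\{u_n\}$; the remaining steps are routine. When $N\le 3$ one may instead simply invoke Lemma \ref{exl3}.
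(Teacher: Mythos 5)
Your proof is correct and follows essentially the same route as the paper: assume $\int_{\R^N}|\nabla u_n^2|^2\,dx\to\infty$, rescale $u_n^2$ into a normalized minimizing sequence for the auxiliary problem $m(1)$, invoke Lemma \ref{l2.5} to get a nontrivial limit, and contradict the $H$-bound on $\{u_n\}$. The only (harmless) variation is in extracting the contradiction — you use a Cauchy--Schwarz bound on $\int_K|\nabla w_n|\,dx$ where the paper uses lower semicontinuity of $\int|\nabla\sqrt{w_n}|^2\,dx$ — and both yield the same incompatibility with $\sup_n\|u_n\|_H<\infty$.
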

\begin{proof}
Let $\{u_n\}$ be a minimizing sequence of $d_{a_{q^*}}(q^*)$. Similar to (\ref{eq2.2}), one can deduce from (\ref{2.2}) that  $\{u_n\}$ is  bounded in $H$.
Now, we claim that \begin{equation}\label{eq2.14}\text{$\int_{\mathbb{R}^N}|\nabla u_n^2|^2dx$ is also bounded uniformly as } \ n\to\infty. \end{equation} Otherwise, if
$$\lim_{n\rightarrow \infty}\int_{\mathbb{R}^N}|\nabla u_n^2|^2dx=+\infty.$$
Since $\{u_n\}$ is a minimizing sequence of $d_{a_{q^*}}(q^*)$, it then follows from above that
$$\lim_{n\rightarrow \infty}\int_{\mathbb{R}^N}|u_n|^{4+\frac{4}{N}}dx =+\infty
\ \text{ and }\lim_{n\rightarrow \infty}\frac{\int_{\mathbb{R}^N}|\nabla u_n^2|^2dx}{\int_{\mathbb{R}^N}|u_n|^{4+\frac{4}{N}}dx}=\frac{Na_{q^*}}{N+1}.$$
Setting \begin{equation}\label{eq2.15}w_n=\varepsilon_n^Nu_n^2(\varepsilon_n x) \text{ with  }\varepsilon_n=\left(\int_{\mathbb{R}^N}|\nabla u_n^2|^2dx\right)^{-\frac{1}{N+2}}.\end{equation}
We have
$$\int_{\mathbb{R}^N}|w_n|dx=\int_{\mathbb{R}^N}|u_n|^{2}dx=1,$$
$$\int_{\mathbb{R}^N}|\nabla w_n|^2dx=\varepsilon_n^{N+2}\int_{\mathbb{R}^N}|\nabla u_n^2|^2dx=1,$$
and
$$\int_{\mathbb{R}^N}|w_n|^{2+\frac{2}{N}}dx=\varepsilon_n^{N+2}\int_{\mathbb{R}^N}|u_n|^{4+\frac{4}{N}}dx\overset{n}\rightarrow \frac{N+1}{Na_{q^*}}.$$
Thus,
$$E_{q^*}^{a_{q^*}}(u_n)=\frac{1}{4}\varepsilon_n^{-(N+2)}F(w_n)+\frac{1}{2}\int_{\mathbb{R}^N}(\nabla u_n|^2+V(x)u_n^2)dx=d_{a_{q^*}}(q^*)+o(1).$$
Consequently,
$$F(w_n)=4\varepsilon_n^{N+2}\left[d_{a_{q^*}}(q^*)-\frac{1}{2}\int_{\mathbb{R}^N}(\nabla u_n|^2+V(x)u_n^2)dx+o(1)\right]\overset{n}\rightarrow 0=m(1).$$
This implies that  $\{w_n\}$ is a  minimizing sequence of $m(1)$ and $\int_{\mathbb{R}^N}|\nabla w_n|^2dx=1$.
It then follows from  Lemma \ref{l2.5} that there exists $\{y_n\}\subset\R^N$ such that   $w_n(\cdot+y_n)\overset{n}\rightarrow w_0=\frac{\lambda_0^N}{|v_{q^*}|_{L^1}}v_{q^*}(\lambda_0 x)$ in $\mathcal{D}^{2,1}(\R^N)$. However, it follows from (\ref{eq2.15}) that
$$\liminf_{n\rightarrow \infty}\varepsilon_n^{2}\int_{\mathbb{R}^N}|\nabla u_n|^2dx=\liminf_{n\rightarrow\infty} \int_{\mathbb{R}^N}|\nabla w_n^{\frac{1}{2}}|^2dx\geq \int_{\mathbb{R}^N}|\nabla w_0^{\frac{1}{2}}|^2dx\geq C>0.$$
This  indicates that $$\int_{\mathbb{R}^N}|\nabla u_n|^2dx\geq C\varepsilon_n^{-2}\overset{n}\rightarrow +\infty,$$
 which contradicts the fact that $\{u_n\}$ is bounded in $H$. Thus, claim (\ref{eq2.14}) is proved.  Furthermore, one can similar to the argument of Lemma \ref{exi1} (i) obtain that  and $u_n\overset{n}\rightarrow u$ in $X$ with $u$ being  a minimizer of $d_{a_{q^*}}(q^*)$.
\end{proof}

\noindent{\bf Proof of Theorem \ref{t1.1}:} Lemma \ref{exi1} together with Lemma \ref{ex3} indicates the conclusions of Theorem \ref{t1.1}.\qed

\section{Case of $a<a_{q^*}$: Proof of Theorem \ref{th1.2}. }\label{se3}
 The aim of this section is to prove that when $a<a_{q^*}$ is fixed,  all minimizers of (\ref{e1.1}) are compact in the space $X$ as $q\nearrow q^*$, which gives the proof of Theorem  \ref{th1.2}.\\

\noindent\textbf{Proof of Theorem \ref{th1.2}:} For any $\eta(x)\in C_0^\infty(\mathbb{R}^N)$ and $|\eta(x)|_{L^2}^2=1$. We can find a constant $C>0$ independent of $q$, such that
\begin{equation}\label{2.10}
d_a(q)\leq E_q^a(\eta)\leq C<\infty.
\end{equation}
Assume that $u_q$ is a nonnegative minimizer of (\ref{e1.1}), we deduce from (\ref{1.5}) that
\begin{align}
&\frac{1}{2}\int_{\mathbb{R}^N} |\nabla
u_q|^2dx+\frac{1}{4}\int_{\mathbb{R}^N} |\nabla
u_q^2|^2dx+\frac{1}{2}\int_{\mathbb{R}^N}
V(x)|u_q|^2dx \nonumber\\
&=d_a(q)+\frac{a}{q+2}\int_{\mathbb{R}^N} |u_q|^{q+2}dx\leq C+\frac{a}{q+2} \frac{1}{\lambda_qa_{q}}|\nabla
u_q^2|_{L^2}^{\frac{2q}{q^*}}.\label{eq3.2}
 \end{align}
This implies that
\begin{equation*}
\frac{1}{4}\int_{\mathbb{R}^N} |\nabla
u_q^2|^2dx\leq C+\frac{a}{q+2} \frac{1}{\lambda_qa_{q}}|\nabla
u_q^2|_{L^2}^{\frac{2q}{q^*}}.
\end{equation*}
We claim that
\begin{equation}\label{2.11}
\limsup_{q\nearrow q^*}\int_{\mathbb{R}^N} |\nabla
u_q^2|^2dx\le C<\infty.
\end{equation}
For otherwise, if $$\int_{\mathbb{R}^N} |\nabla
u_q^2|^2dx\triangleq M_q\rightarrow\infty \text{ as }q\nearrow q^*.$$ On  one hand, we know from (\ref{eq3.2}) that
\begin{equation*}
\begin{split}
M_q\leq C+\frac{a}{q+2} \frac{4}{\lambda_qa_{q}}|\nabla
u_q^2|_{L^2}^{\frac{2q}{q^*}}\leq \frac{1}{2}\left(1-\frac{a}{a_{q^*}}\right)M_q^{\frac{q}{q^*}}+\frac{a}{q+2} \frac{4}{\lambda_qa_{q}}M_q^{\frac{q}{q^*}},
\end{split}
\end{equation*}
i.e.,
\begin{equation}\label{2.12}
M_q\leq \left[\frac{1}{2}\left(1-\frac{a}{a_{q^*}}\right)+\frac{a}{q+2} \frac{4}{\lambda_qa_{q}}\right]^{\frac{q^*}{q^*-q}}.
\end{equation}
On the other hand, from the definitions of $\lambda_q$ and $a_q$ in (\ref{eq1.12}) we get that
\begin{equation} \label{eq3.4}\frac{1}{q+2}\frac{1}{\lambda_q}\rightarrow \frac{1}{4}\ \text{ and } \  a_q\rightarrow a_{q^*} \ \text{ as } q\nearrow q^*.\end{equation}
Thus,
$$\lim_{q\nearrow q^*}\left[\frac{1}{2}\left(1-\frac{a}{a_{q^*}}\right)+\frac{a}{q+2} \frac{4}{\lambda_qa_{q}}\right]=\left[\frac{1}{2}\left(1-\frac{a}{a_{q^*}}\right)+\frac{a}{a_{q^*}}\right]<1.$$
This together with (\ref{2.12})  implies that
\begin{equation*}
M_q\leq \left[\frac{1}{2}\left(1-\frac{a}{a_{q^*}}\right)+\frac{a}{q+2} \frac{4}{\lambda_qa_{q}}\right]^{\frac{q^*}{q^*-q}}\rightarrow 0 \ \text{ as }q\nearrow q^*,
\end{equation*}
this is impossible. Hence, (\ref{2.11}) is obtained and   it is easy to further show that
\begin{equation*}
\frac{1}{2}\int_{\mathbb{R}^N} (|\nabla
u_q|^2dx+
V(x)|u_q|^2)dx\le C<\infty.
\end{equation*}
Which means that $\{u_q\}$ is bounded in $H^1(\mathbb{R}^N)$. As a consequence,  there exists a subsequence (denoted still by $\{u_q\}$ ), and $0\le  u_0\in H$, such that
\begin{equation*}
u_q\rightharpoonup u_0\ in\ H; \ u_q\rightarrow u_0\ in\ L^p(\mathbb{R}^N),\ \forall\ 2\leq p<2^*.
\end{equation*}
By applying  Lebesgue's dominated convergence theorem, one can obtain from above that
\begin{equation*}
\lim_{q\nearrow q^*}\int_{\mathbb{R}^N} |u_q|^{q+2}dx=\int_{\mathbb{R}^N} |u_0|^{q^*+2}dx.
\end{equation*}
%
Therefore,
\begin{align}
&\lim_{q\nearrow q^*}d_a(q)
=\lim_{q\nearrow q^*}\Big[\frac{1}{2}\int_{\mathbb{R}^N} \left(|\nabla
u_q|^2+V(x)|u_q|^2\right)dx+\frac{1}{4}\int_{\mathbb{R}^N} |\nabla
u_q^2|^2dx-\frac{a}{q+2}\int_{\mathbb{R}^N} |u_q|^{q+2}dx\Big]\nonumber\\
 &\geq \frac{1}{2}\int_{\mathbb{R}^N} \left(|\nabla
u_0|^2+V(x)|u_0|^2\right)dx+\frac{1}{4}\int_{\mathbb{R}^N} |\nabla
u_0^2|^2dx-\frac{a}{q^*+2}\int_{\mathbb{R}^N} |u_0|^{q^*+2}dx\nonumber\\
&=E_{q^*}(u_0)\geq d_a(q^*).\label{2.13}
 \end{align}
On the other hand, for any $\varepsilon>0$, there exists $u_\varepsilon\in X$ and $|u_\varepsilon|_{L^2}^2=1$, such that
\begin{equation*}
E_{q^*}^a (u_\varepsilon)\leq d_a(q^*)+\varepsilon.
\end{equation*}
Then,
\begin{equation*}
\begin{split}
&\lim_{q\nearrow q^*}d_a(q)\leq \lim_{q\nearrow q^*}E_q(u_\varepsilon)\\
&\leq\lim_{q\nearrow q^*}\left\{\frac{1}{2}\int_{\mathbb{R}^N} \left(|\nabla
u_\varepsilon|^2+V(x)|u_\varepsilon|^2\right)dx+\frac{1}{4}\int_{\mathbb{R}^N} |\nabla
u_\varepsilon^2|^2dx-\frac{a}{q+2}\int_{\mathbb{R}^N} |u_\varepsilon|^{q+2}dx\right\}\\
&=E_{q^*}^a(u_\varepsilon)+\lim_{q\nearrow q^*}\left\{\frac{a}{q^*+2}\int_{\mathbb{R}^N} |u_\varepsilon|^{q^*+2}dx-\frac{a}{q+2}\int_{\mathbb{R}^N} |u_\varepsilon|^{q+2}dx\right\}\\
&\leq d_a(q^*)+\varepsilon,
 \end{split}
 \end{equation*}
Letting $\varepsilon\rightarrow 0$, this inequality together with (\ref{2.13}) gives that
\begin{equation*}
\lim_{q\rightarrow q^*}d_a(q)=d_a(q^*)=E_{q^*}^a(u_0).
\end{equation*}
Hence, $u_0$ is a nonnegative minimizer of $d_a(q^*)$ and $u_q\rightarrow u_0$ in $X$ as $q\nearrow q^*$.
\qed


\section{Case of $a>a_{q^*}$: Proof of Theorem \ref{th1.3}.}\label{se4}
This section is devoted to proving Theorem \ref{th1.3} on the blow-up  of minimizers for (\ref{e1.1}) as $q\nearrow q^*$ for the case of $a>a_{q^*}$. For this purpose, we introduce the following functional
\begin{equation*}
\tilde{E}_q^a(u)=\frac{1}{2}\int_{\mathbb{R}^N} |\nabla
u|^2dx+\frac{1}{4}\int_{\mathbb{R}^N} |\nabla
u^2|^2dx-\frac{a}{q+2}\int_{\mathbb{R}^N} |u|^{q+2}dx£¬
\end{equation*}
and consider  the following minimization problem
\begin{equation}\label{eq4.1}
\tilde{d}_a(q)=\inf_{u\in M}\tilde{E}_q^a(u).
\end{equation}
We remark that when $a>a_{q^*}$, it  follows from (\ref{eq3.4}) that
\begin{equation}\label{eq4.22}
 \lim_{q\nearrow q^*}\frac{4aq}{q^*\lambda_q a_q(q+2)}=\frac{a}{a_{q^*}}>1.
 \end{equation}
 We then  obtain from Lemma \ref{le4.1} below that $\tilde{d}_a(q)<0$ if $q<q^*$ and is closed to $q^*$. As a consequence, one can use  \cite[Lemma 1.1]{jlw}
to deduce that (\ref{eq4.1}) has at least one minimizer.

\subsection{The blow-up analysis for the  minimizers of (\ref{eq4.1}).}\label{S4.1}
In this subsection, we study the following concentration phenomena for the minimizers of (\ref{eq4.1}) as $q\nearrow q^*$, which is crucial for the proving of Theorem \ref{th1.3}.
\begin{theorem}\label{th4.1}
Let $a>a_{q^*}$  and  $u_q$ be a nonnegative minimizer of (\ref{eq4.1}) with $q<q^*$. For any sequence of $\{u_q\}$,  there exists a subsequence, denoted still by $\{u_q\}$,  and  $\{y_{\varepsilon_q}\}\subset \R^N$ such that the scaling
\begin{equation}\label{eq4.30}
w_q=\varepsilon_q^{\frac{N}{2}}u_q(\varepsilon_q
x+\varepsilon_qy_{\varepsilon_q})
\end{equation}
satisfies
\begin{equation}\label{eq4.3}
\lim_{q\nearrow q^*} \varepsilon_q^{N}u_q^2(\varepsilon_q
x+\varepsilon_qy_{\varepsilon_q})=w_0^2:=\frac{\lambda^N}{|v_{q^*}|_{L^1}}v_{q^*}\Big(\lambda |x-x_0|\Big) \text{ in } \mathcal {D}^{2,1}(\mathbb{R}^N),
\end{equation}
where $\lambda,\ \varepsilon_q$ is given by (\ref{eq1.15}) and  $x_0\in\R^N$. Moreover, there exist positive constants  $C,\mu $ and $R$ independent of $q$, such that
\begin{equation}\label{eq4.4}
w_q(x)\leq C e^{-\mu |x|} \text{ for any } |x|>R \text{ as } q\nearrow q^*.
\end{equation}
\end{theorem}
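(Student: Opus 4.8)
The plan is to extract the blow-up scaling from an energy estimate, then identify the limiting profile via the compactness analysis already developed for $m(1)$ (Lemmas \ref{lem2.4} and \ref{l2.5}). First I would establish matching upper and lower bounds for $\tilde d_a(q)$ as $q\nearrow q^*$ when $a>a_{q^*}$. For the upper bound, I plug the test function $u_\tau$ of the form used in Lemma \ref{exi1}(ii) (a rescaled $\sqrt{v_{q^*}}$, but now with the scaling parameter tied to $q$) into $\tilde E_q^a$ and optimize; the competition between the $\frac14\int|\nabla u^2|^2$ term and the $-\frac{a}{q+2}\int|u|^{q+2}$ term, governed by \eqref{eq4.22}, forces $\tilde d_a(q)\to-\infty$ at the precise rate encoded by $\varepsilon_q$ in \eqref{eq1.15}; this is exactly the content of the "Lemma \ref{le4.1}" referenced just before the theorem, which I would assume. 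The lower bound comes from the sharp Gagliardo–Nirenberg inequality \eqref{1.5}/\eqref{2.2}: for any $u\in M$,
\[
\tilde E_q^a(u)\ \ge\ \frac14\int_{\R^N}|\nabla u^2|^2dx-\frac{a}{q+2}\cdot\frac{1}{\lambda_q a_q}\Big(\int_{\R^N}|\nabla u^2|^2dx\Big)^{q/q^*},
\]
and minimizing the right side over the value of $\int|\nabla u^2|^2$ yields the sharp asymptotic rate. Comparing the two shows that $\varepsilon_q^{N+2}\tilde d_a(q)\to$ a finite negative constant, and simultaneously that any minimizer $u_q$ satisfies $\varepsilon_q^{N+2}\int_{\R^N}|\nabla u_q^2|^2dx\to$ a positive constant, while the lower-order terms $\int|\nabla u_q|^2$ and $\int V(x)u_q^2$ are negligible after this rescaling.

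Next I would perform the change of variables $w_q=\varepsilon_q^{N/2}u_q(\varepsilon_q\cdot)$, so that $v_q:=w_q^2=\varepsilon_q^{N}u_q^2(\varepsilon_q\cdot)$ has $|v_q|_{L^1}=1$ and $\int|\nabla v_q|^2dx\to 1$ (after normalizing the constant appropriately), and the energy asymptotics show that $\{v_q\}$ is, up to the normalization, a minimizing sequence for $m(1)$ in the sense of Lemma \ref{l2.5}: indeed $F(v_q)=\varepsilon_q^{N+2}\big(4\tilde E_q^a(u_q)+o(1)\big)\to 0=m(1)$ using \eqref{2.1} and \eqref{eq3.4} to handle the slight mismatch between the exponent $q$ and $q^*$ in the $L^{q+2}$ term (Lebesgue/interpolation). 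Then Lemma \ref{l2.5} directly delivers a translation sequence $\{y_{\varepsilon_q}\}$ and a point $x_0\in\R^N$ with
\[
v_q(\cdot+y_{\varepsilon_q})\ \longrightarrow\ \frac{\lambda^N}{|v_{q^*}|_{L^1}}v_{q^*}(\lambda|x-x_0|)\quad\text{in }\mathcal D^{2,1}(\R^N),
\]
with $\lambda=(|v_{q^*}|_{L^1}/N)^{1/(N+2)}$, which is precisely \eqref{eq4.3}. The passage from the "$q$" exponent to "$q^*$" in controlling $\int v_q^{(q+2)/2}$ versus $\int v_q^{1+1/N}$ is where I would be most careful: one needs a uniform bound forcing $\int v_q^{(q+2)/2}dx\to\frac{N+1}{Na_{q^*}}$, which requires knowing $\{v_q\}$ does not spread or vanish — and that itself uses the concentration-compactness dichotomy as in Lemma \ref{l2.5}, so the argument must be threaded so as not to be circular.

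Finally, for the exponential decay \eqref{eq4.4} I would return to the Euler–Lagrange equation. Each $u_q$ solves \eqref{1.1} with $a$ replaced by $a$ and $V\equiv 0$ (the functional $\tilde E_q^a$ has no potential), with Lagrange multiplier $\mu_q$; rescaling, $w_q$ solves a quasilinear equation of the form $-\Delta w_q-\Delta(w_q^2)w_q=\varepsilon_q^2\mu_q w_q+a\varepsilon_q^{2-Nq/2+\dots}|w_q|^q w_q$. The energy estimates pin down the sign and scale of the rescaled multiplier: $\varepsilon_q^2\mu_q\to -\lambda^2<0$ (this follows by multiplying the equation by $u_q$, or by a Pohozaev-type identity, together with the computed energy), so the limiting operator behaves like $-\Delta - \varepsilon_q^2\mu_q$ with a strictly positive "mass". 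Since $w_q\to w_0$ locally uniformly with $w_0(x)\to 0$ as $|x|\to\infty$ (from \eqref{eq4.3} plus the compact support / decay of $v_{q^*}$ in the Remark after \eqref{1.3}), standard comparison-function arguments (De Giorgi–Nash–Moser for local $L^\infty$ bounds, then a barrier $Ce^{-\mu|x|}$ using the positive mass) give the uniform exponential bound for $|x|>R$ with $C,\mu,R$ independent of $q$. The main obstacle throughout is the quasilinear term $\Delta(u^2)u$: it obstructs a direct application of the semilinear regularity and comparison machinery, so I would either work with $v_q=w_q^2$ as the natural unknown (for which the principal part $-\Delta v_q$ is nice, at the cost of a gradient term $\frac{|\nabla v_q|^2}{2v_q}$ that must be controlled where $v_q$ is small) or invoke the change-of-variables linearization used in \cite{cj,lww3}; reconciling this with the $q$-dependence uniformly is the delicate point.
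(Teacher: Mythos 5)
Your overall architecture (sharp two-sided energy asymptotics for $\tilde d_a(q)$, the rescaling by $\varepsilon_q=t_q^{-1/(N+2)}$, then identification of the limit profile, then a barrier argument for the decay) matches the paper's, but your central compactness step takes a genuinely different route. The paper does \emph{not} feed $v_q=w_q^2$ back into the auxiliary problem $m(1)$; instead it obtains the translation sequence from Lions' vanishing lemma applied to $\int|w_q|^{q+2}$, passes to the weak limit in the rescaled Euler--Lagrange equation (where the term $\varepsilon_q^N\Delta w_q$ disappears and the multiplier satisfies $\mu_q\varepsilon_q^{N+2}\to-\frac1N$), derives Pohozaev identities for the limit equation $-\Delta(w_0^2)=-\frac1N+a_{q^*}(w_0^2)^{1+2/N}$, and only then invokes optimality in the Gagliardo--Nirenberg inequality to pin down $w_0^2$ and upgrade to strong $\mathcal D^{2,1}$ convergence. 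Your route reuses Lemma \ref{l2.5} directly, exactly as the paper itself does in Lemma \ref{ex3}; it buys you a shorter argument that avoids the limit PDE and the multiplier estimate entirely for the profile identification (though you still need the equation for the decay estimate), at the cost of having to normalize $\int|\nabla v_q|^2\to1$ to exactly $1$ (a harmless dilation) before applying Lemma \ref{l2.5}.

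The step you flag as potentially circular is in fact not: you do not need any non-vanishing information to get $\int w_q^{q^*+2}\to\frac{N+1}{Na_{q^*}}$. From Lemma \ref{le4.2} you already have $\int w_q^{q+2}\to\frac{N+1}{Na_{q^*}}$ by pure energy/scaling computations; the upper bound $\limsup\int w_q^{q^*+2}\le\frac{N+1}{Na_{q^*}}$ is the sharp inequality \re{2.2} applied to $v_q=w_q^2$ with $|v_q|_{L^1}=1$ and $\int|\nabla v_q|^2\to1$; and the lower bound follows from H\"older,
$\int w_q^{q+2}\le\big(\int w_q^2\big)^{1-q/q^*}\big(\int w_q^{q^*+2}\big)^{q/q^*}$,
together with $q/q^*\to1$. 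So $F(v_q)\to0=m(1)$ and Lemma \ref{l2.5} applies with no circularity. Two small corrections: the relevant normalization of the Lagrange multiplier is $\mu_q\varepsilon_q^{N+2}$ (not $\mu_q\varepsilon_q^{2}$), since the rescaled equation reads $-\varepsilon_q^N\Delta w_q-\Delta(w_q^2)w_q=\mu_q\varepsilon_q^{N+2}w_q+a\varepsilon_q^{N+2-Nq/2}w_q^{q+1}$ and the limit of $\mu_q\varepsilon_q^{N+2}$ is $-\frac1N$; and the exponent in $F$ acting on $v_q$ is $2+\frac2N$, not $1+\frac1N$.
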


To prove the above theorem, we first  give the  following energy estimate of $\tilde{d}_a(q)$.
\begin{lemma}\label{le4.1}
Let $a>a_{q^*}$ be fixed. Then,
$$\tilde{d}_a(q)=-\frac{q^*-q}{4q}\left(\frac{4aq}{q^*a_q\lambda_q(q+2)}\right)^{\frac{q^*}{q^*-q}}(1+o(1))(\to-\infty) \ \text{ as } q\nearrow q^*.$$
\end{lemma}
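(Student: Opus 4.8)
\textbf{Proof proposal for Lemma \ref{le4.1}.}

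The plan is to prove the two matching bounds $\limsup_{q\nearrow q^*}\frac{\tilde d_a(q)}{-\frac{q^*-q}{4q}(\frac{4aq}{q^*a_q\lambda_q(q+2)})^{q^*/(q^*-q)}}\le 1$ and the corresponding $\liminf\ge 1$ separately, using a one-parameter family of rescaled test functions for the upper bound and the sharp Gagliardo--Nirenberg inequality \re{1.5} for the lower bound. For the upper bound, I would plug into $\tilde E_q^a$ the family $u_\tau(x)=\tau^{N/2}\psi(\tau x)$ where $\psi$ is built from the optimizer $v_{q^*}$ of \re{2.2}, normalized so that $|u_\tau|_{L^2}^2=1$; concretely $\psi^2=v_{q^*}/|v_{q^*}|_{L^1}$ up to the scaling used in \re{e2.9}. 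Then $\int|\nabla u_\tau^2|^2=c_1\tau^{N+2}$, $\int |u_\tau|^{q^*+2}=c_2\tau^{N+2}$ with $c_1/c_2=\frac{Na_{q^*}}{N+1}$ by \re{2.1}, while $\int|\nabla u_\tau|^2=O(\tau^2)$; the slightly subcritical term $\int|u_\tau|^{q+2}$ equals $\tau^{(q+2)(\frac{N}{2})-N}\int\psi^{q+2}$, so one keeps careful track of the exponent $\tau^{N+ \frac{N}{2}(q-q^*)+2}$. Collecting terms, $\tilde E_q^a(u_\tau)=A\tau^{N+2}-B\tau^{N+2-\frac{N}{2}(q^*-q)}+O(\tau^2)$ with $A=\frac{c_1}{4}$ and $B=\frac{a}{q+2}c_2\cdot(\text{a factor}\to 1)$; since \re{eq4.22} guarantees $B/A\to a/a_{q^*}>1$, for $q$ close to $q^*$ this function of $\tau$ is negative and one minimizes over $\tau>0$. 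The minimization of $A\tau^{N+2}-B\tau^{N+2-s}$ with $s=\frac{N}{2}(q^*-q)\to 0^+$ is elementary calculus: the optimal $\tau$ behaves like $(B/A)^{1/s}$ up to lower-order corrections, and the minimal value is $-\frac{s}{N+2}(\frac{N+2-s}{N+2})^{(N+2-s)/s}\cdot(\ldots)$, which after substituting $s$ and simplifying the constants using \re{2.1} and the explicit $\lambda_q,a_q$ from \re{eq1.12} yields exactly $-\frac{q^*-q}{4q}(\frac{4aq}{q^*a_q\lambda_q(q+2)})^{q^*/(q^*-q)}(1+o(1))$; the $O(\tau^2)$ term is negligible because $\tau^2/\tau^{N+2}\to 0$ on the optimal scale.

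For the lower bound, I would start from an arbitrary $u\in M$ and use \re{1.5} applied to $u^2$ (note $\int_{\R^N}|u^2|^{(q+2)/2}dx=\int|u|^{q+2}dx$ and $|u^2|_{L^1}=|u|_{L^2}^2=1$) to get
\[
\frac{a}{q+2}\int_{\R^N}|u|^{q+2}dx\le \frac{a}{(q+2)\Upsilon_q}\Big(\int_{\R^N}|\nabla u^2|^2dx\Big)^{\frac{(q+2)\theta_q}{4}}.
\]
Writing $t=\int|\nabla u^2|^2dx\ge 0$ and $p=\frac{(q+2)\theta_q}{4}=\frac{q}{q^*}<1$ (using $\theta_q=\frac{2qN}{(q+2)(N+2)}$ and $q^*=2+\frac4N$), one obtains $\tilde E_q^a(u)\ge \frac14 t-\kappa_q t^{p}$ with $\kappa_q=\frac{a}{(q+2)\Upsilon_q}=\frac{a}{(q+2)\lambda_q a_q}$, since the nonnegative term $\frac12\int|\nabla u|^2$ only helps. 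Minimizing $g(t)=\frac14 t-\kappa_q t^p$ over $t\ge 0$ gives $\min g = -\frac{1-p}{4}\big(\frac{4p\kappa_q}{1}\big)^{1/(1-p)}\cdot p^{\,p/(1-p)}\cdot(\text{rearranged})$; with $1-p=\frac{q^*-q}{q^*}$ and $p=q/q^*$, a short computation turns this into $-\frac{q^*-q}{4q}\big(\frac{4aq}{q^*\lambda_q a_q(q+2)}\big)^{q^*/(q^*-q)}$ exactly, so $\tilde d_a(q)$ is bounded below by precisely the claimed quantity with no error term. Combining with the upper bound finishes the proof; the divergence to $-\infty$ follows because $\frac{4aq}{q^*\lambda_q a_q(q+2)}\to a/a_{q^*}>1$ by \re{eq4.22} while the exponent $q^*/(q^*-q)\to+\infty$.

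The main obstacle is purely bookkeeping: making the constants in the upper-bound test-function computation match the lower bound to leading order, i.e.\ verifying that the optimal choice of $\tau$ indeed produces the factor $(\frac{4aq}{q^*\lambda_q a_q(q+2)})^{q^*/(q^*-q)}$ rather than something off by a $q$-dependent factor inside the large exponent (which would destroy the asymptotic, since anything raised to the power $q^*/(q^*-q)$ is extremely sensitive). The key identities that make this work are \re{2.1} relating $\int v_{q^*}^{(q^*+2)/2}$, $\int|\nabla v_{q^*}|^2$ and $\int v_{q^*}$, together with the definitions \re{eq1.12} of $\lambda_q$ and $a_q$; I would carry the subcritical correction $\int|u_\tau|^{q+2}=(\int\psi^{q+2})\,\tau^{N+2-\frac N2(q^*-q)}$ with the factor $\int\psi^{q+2}\to\int\psi^{q^*+2}$ and absorb its deviation from $1$ into the $o(1)$, checking that it does not contaminate the base of the large power. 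Once the upper and lower bounds are seen to have the same base and the same $\frac{q^*-q}{4q}$ prefactor, the $(1+o(1))$ statement is immediate.
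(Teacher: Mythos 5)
Your lower bound is exactly the paper's argument and is complete: applying \re{1.5} to $u^2$ with $|u^2|_{L^1}=1$ reduces $\tilde E_q^a(u)$ to $\frac14 t-\frac{a}{(q+2)\lambda_q a_q}t^{q/q^*}$, whose minimum over $t>0$ is precisely $-\frac{q^*-q}{4q}\big(\frac{4aq}{q^*\lambda_q a_q(q+2)}\big)^{q^*/(q^*-q)}$, with no error term.

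The gap is in your upper bound, and it is exactly at the point you flag as "purely bookkeeping" — it is not. You build the test family from the \emph{fixed} profile $v_{q^*}$. For such a $u_\tau$ the Gagliardo--Nirenberg inequality \re{1.5} at exponent $q$ is strict: writing $R_q(w)=\int|w|^{(q+2)/2}\big/\big(\int|\nabla w|^2\big)^{q/q^*}$ for $|w|_{L^1}=1$, you get $R_q(u_\tau^2)=R_q(v_{q^*}/|v_{q^*}|_{L^1})=(1-\delta_q)/(\lambda_q a_q)$ with $\delta_q\ge 0$, and the optimization over $\tau$ then yields
\begin{equation*}
\min_{\tau>0}\tilde E_q^a(u_\tau)\;\ge\;-\frac{q^*-q}{4q}\left(\frac{4aq(1-\delta_q)}{q^*\lambda_q a_q(q+2)}\right)^{\frac{q^*}{q^*-q}}+O(\tau^2).
\end{equation*}
Because the defect sits \emph{inside the base of the power} $q^*/(q^*-q)\to\infty$, you need $\delta_q=o(q^*-q)$ for $(1-\delta_q)^{q^*/(q^*-q)}\to1$. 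Mere continuity or differentiability of $q\mapsto\int\psi^{q+2}$ and of $\lambda_q a_q$ only gives $\delta_q=O(q^*-q)$, and $(1-c(q^*-q))^{q^*/(q^*-q)}\to e^{-cq^*}$, a constant possibly strictly less than $1$; your upper bound would then be off by a fixed multiplicative factor and the claimed asymptotic would not follow. One can try to rescue this via an envelope-type argument ($v_{q^*}$ is the optimizer at $q^*$, so the first-order variation of $R_q$ through the profile vanishes there, forcing $\delta_q=o(q^*-q)$), but that requires differentiability of $q\mapsto \Upsilon_q$ and some stability of $v_q$, none of which is established. The paper sidesteps all of this by taking the test function $u_\tau=\tau^{N/2}|v_q|_{L^1}^{-1/2}\sqrt{v_q(\tau x)}$ built from the optimizer $v_q$ \emph{at the same subcritical exponent} $q$; then \re{1.5} is an identity for $u_\tau^2$, the quantity $\frac14\int|\nabla u_\tau^2|^2-\frac{a}{q+2}\int u_\tau^{q+2}$ equals $g(t)$ from the lower bound exactly, and choosing $\tau$ so that $\int|\nabla u_\tau^2|^2=t_q$ matches the lower bound with zero defect, leaving only the harmless $c_1\tau^2=O\big(t_q^{2/(N+2)}\big)=o\big((q^*-q)t_q\big)$ correction. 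You should replace $v_{q^*}$ by $v_q$ in your test function; otherwise the upper bound is incomplete.
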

\begin{proof}
For  any $ u\in M$, we obtain from (\ref{1.5}) that

\begin{align}\label{3.1}
\tilde{E}_q^a(u)&=\frac{1}{2}\int_{\mathbb{R}^N} |\nabla
u|^2dx+\frac{1}{4}\int_{\mathbb{R}^N} |\nabla
u^2|^2dx-\frac{a}{q+2}\int_{\mathbb{R}^N} |u|^{q+2}d\nonumber\\
&\geq \frac{1}{4}\int_{\mathbb{R}^N} |\nabla
u^2|^2dx-\frac{a}{(q+2)\lambda_q a_q}\left(\int_{\mathbb{R}^N} |\nabla
u^2|^2dx\right)^{\frac{q}{q^*}}
\end{align}
Setting $$\int_{\mathbb{R}^N} |\nabla
u^2|^2dx=t \text{ and }\ g(t)=\frac{1}{4}t-\frac{a}{(q+2)\lambda_q a_q}t^{\frac{q}{q^*}},\ t\in(0,+\infty).$$ It is easy to know that  $g(t)$ gets its minimum at a unique  point \begin{equation}\label{eq4.2} t_q=\left(\frac{4aq}{q^*\lambda_q a_q(q+2)}\right)^{\frac{q^*}{q*-q}}, \end{equation} Hence,
\begin{equation}\label{3.2}
g(t)\geq g(t_q)=-\frac{q^*-q}{4q}\left(\frac{4aq}{q^*\lambda_q a_q(q+2)}\right)^{\frac{q^*}{q*-q}}.
\end{equation}
From (\ref{3.1}) and (\ref{3.2}), we know that
$$\tilde{d}_a(q)\geq -\frac{q^*-q}{4q}\left(\frac{4aq}{q^*\lambda_q a_q(q+2)}\right)^{\frac{q^*}{q*-q}}.$$
This gives the lower bound of $\tilde{d}_a(q)$. We next will prove the upper bound.

Let $$u_\tau=\frac{\tau^{\frac{N}{2}}}{|v_{q}|_{L^1}^{\frac{1}{2}}}\sqrt{v_{q}(\tau x)},\ \tau>0,$$
where $v_q(x)$ is the unique nonnegative solution of (\ref{1.3}). Then,
 $\int_{\mathbb{R}^N}u_\tau^2dx=1$ and it follows from (\ref{1.4}) that
\begin{equation*}
\int_{\mathbb{R}^N}|\nabla
u_\tau^2|^2dx=\frac{\tau^{N+2}}{|v_{q}|_{L^1}^2}\int_{\mathbb{R}^N}|\nabla
v_{q}|^2dx=\frac{\theta_q\tau^{N+2}}{(1-\theta_q)|v_{q}|_{L^1}},
\end{equation*}
\begin{equation*}
\int_{\mathbb{R}^N}u_\tau^{q+2}dx=\frac{\tau^{\frac{Nq}{2}}}{|v_{q}|_{L^1}^{\frac{q+2}{2}}}\int_{\mathbb{R}^N}v_{q}^{\frac{q+2}{2}}dx
=\frac{\tau^{\frac{Nq}{2}}}{(1-\theta_q)|v_{q}|_{L^1}^{\frac{q}{2}}},
\end{equation*}
\begin{equation*}
\int_{\mathbb{R}^N}|\nabla
u_\tau|^2dx=\frac{\tau^{2}}{|v_{q}|_{L^1}}\int_{\mathbb{R}^N}|\nabla
\sqrt{v_{q}}|^2dx\leq c_1\tau^2.
\end{equation*}
Taking $\tau=\left(\frac{(1-\theta_q)t_q |v_q|_{L^1}}{\theta_q}\right)^{\frac{1}{N+2}}$ with $t_q$  given by (\ref{eq4.2}), we then have
\begin{align*}
&\frac{1}{4}\int_{\mathbb{R}^N}|\nabla
u_\tau^2|^2dx-\frac{a}{q+2}\int_{\mathbb{R}^N}u_\tau^{q+2}dx=\frac{\theta_q\tau^{N+2}}{4(1-\theta_q)|v_{q}|_{L^1}}-\frac{a}{q+2}\frac{\tau^{\frac{Nq}{2}}}{(1-\theta_q)|v_{q}|_{L^1}^{\frac{q}{2}}}\nonumber\\
&=-\frac{q^*-q}{4q}\left(\frac{4aq}{q^*\lambda_q a_q(q+2)}\right)^{\frac{q^*}{q*-q}},
\end{align*}
 and
 \begin{equation*}
\int_{\mathbb{R}^N}|\nabla u_\tau|^2dx\leq c_0\tau^2\leq c_1\left(\frac{4aq}{q^*\lambda_q a_q(q+2)}\right)^{\frac{q^*}{q*-q}\cdot \frac{2}{N+2}}.
\end{equation*}
 Therefore,
 \begin{equation}\label{3.9}
 \tilde{E}_q^a(u_\tau)\leq -\frac{q^*-q}{4q}\left(\frac{4aq}{q^*\lambda_q a_q(q+2)}\right)^{\frac{q^*}{q*-q}}+c_1\left(\frac{4aq}{q^*\lambda_q a_q(q+2)}\right)^{\frac{q^*}{q*-q}\cdot \frac{2}{N+2}}.
 \end{equation}
From (\ref{eq4.22}) we see that
 $\left(\frac{4aq}{q^*\lambda_q a_q(q+2)}\right)^{\frac{q^*}{q^*-q}}\rightarrow +\infty$ as $q\nearrow q^*$. 
This together with (\ref{3.9}) implies  that
 \begin{equation*}
 \tilde{d}_a(q)\leq\tilde{E}_q^a(u_\tau)\leq -\frac{q^*-q}{4q}\left(\frac{4aq}{q^*\lambda_q a_q(q+2)}\right)^{\frac{q^*}{q*-q}}(1+o(1)).
 \end{equation*}
 Which gives the upper bound of $ \tilde{d}_a(q)$ and the lemma is
 proved.
\end{proof}

\begin{lemma}\label{le4.2}
Let $a>a_{q^*}$ be fixed and $u_q$ be a nonnegative minimizer of $\tilde{d}_a(q)$. Then
\begin{equation}\label{3.10}
\int_{\mathbb{R}^N}|\nabla u_q^2|^2dx\approx \frac{4a}{q+2}\int_{\mathbb{R}^N}u_q^{q+2}dx\approx \left(\frac{4aq}{q^*\lambda_q a_q(q+2)}\right)^{\frac{q^*}{q*-q}}:=t_q
\end{equation}
and
\begin{equation}\label{3.11}
\frac{\int_{\mathbb{R}^N}|\nabla u_q|^2dx}{\int_{\mathbb{R}^N}|\nabla u_q^2|^2dx}\rightarrow 0 \ \text{ as }\ q\nearrow q^*.
\end{equation}
 Here $a\approx b$ means that $\frac{a}{b}\rightarrow 1$ as $q\nearrow q^*$.
\end{lemma}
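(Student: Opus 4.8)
The plan is to derive both statements from the two-sided energy estimate of Lemma \ref{le4.1} together with the lower bound (\ref{3.1})--(\ref{3.2}), comparing what the energy equality $\tilde E_q^a(u_q)=\tilde d_a(q)$ forces upon the quantities $\int|\nabla u_q^2|^2$, $\int u_q^{q+2}$ and $\int|\nabla u_q|^2$ separately. Write $t=\int_{\R^N}|\nabla u_q^2|^2dx$ for the minimizer and recall $g(t)=\frac14 t-\frac{a}{(q+2)\lambda_q a_q}t^{q/q^*}$ attains its strict minimum $g(t_q)=\tilde d_a(q)+o(\tilde d_a(q))$ at the point $t_q$ of (\ref{eq4.2}), which blows up like $\left(\frac{4aq}{q^*\lambda_q a_q(q+2)}\right)^{q^*/(q^*-q)}$ by (\ref{eq4.22}).

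First I would establish (\ref{3.10}). From (\ref{3.1}) we have $\tilde d_a(q)=\tilde E_q^a(u_q)\ge \frac12\int|\nabla u_q|^2+g\big(\int|\nabla u_q^2|^2\big)\ge g\big(\int|\nabla u_q^2|^2\big)\ge g(t_q)$, and since the left side equals $g(t_q)(1+o(1))$ by Lemma \ref{le4.1}, all inequalities are asymptotic equalities: in particular $\frac12\int|\nabla u_q|^2=o(|g(t_q)|)$ and $g\big(\int|\nabla u_q^2|^2\big)=g(t_q)(1+o(1))$. The key quantitative fact is that $g$ is strictly convex near its minimum with $g''(t_q)>0$ of the right order, so an asymptotic equality $g(t)=g(t_q)(1+o(1))$ together with $t,t_q\to\infty$ forces $t/t_q\to 1$; I would make this precise by writing $g(t)=g(t_q)+\frac12 g''(\xi)(t-t_q)^2$ and estimating $g''$, or more simply by noting $g(t)/g(t_q)\to1$ and the function $s\mapsto \frac14 s-\frac{a}{(q+2)\lambda_q a_q}s^{q/q^*}$ (normalized by $|g(t_q)|$ with $s=t/t_q$) converges, uniformly on compacts away from its root, to a strictly convex limit whose unique minimizer is $s=1$. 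This gives $\int|\nabla u_q^2|^2\approx t_q$; then from $\tilde d_a(q)=\frac14\int|\nabla u_q^2|^2+\frac12\int|\nabla u_q|^2+\frac12\int V u_q^2-\frac{a}{q+2}\int u_q^{q+2}$ and the known value of $\tilde d_a(q)$ one solves for $\frac{a}{q+2}\int u_q^{q+2}$ and reads off $\frac{4a}{q+2}\int u_q^{q+2}\approx t_q$ as well.

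Then (\ref{3.11}) is immediate: we showed $\frac12\int|\nabla u_q|^2dx=o\big(|g(t_q)|\big)=o(t_q)$ (since $|g(t_q)|$ and $t_q$ have the same order up to the factor $\frac{q^*-q}{4q}$ — here one must be a little careful, as $|\tilde d_a(q)|=\frac{q^*-q}{4q}t_q(1+o(1))$ is of strictly smaller order than $t_q$). So the bound to exploit is rather that $\int|\nabla u_q|^2dx\le 2\tilde d_a(q)+\frac{a}{q+2}\int u_q^{q+2}-\frac14\int|\nabla u_q^2|^2$, and by (\ref{3.10}) the right-hand side is $\frac{a}{q+2}\int u_q^{q+2}-\frac14\int|\nabla u_q^2|^2+2\tilde d_a(q)=\frac14 t_q - \frac14 t_q + o(t_q)=o(t_q)$, whence $\int|\nabla u_q|^2/\int|\nabla u_q^2|^2 = o(t_q)/(t_q(1+o(1)))\to0$. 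The main obstacle is making the ``$g(t)\approx g(t_q)\Rightarrow t\approx t_q$'' step rigorous uniformly in $q$, because the competing powers $1$ and $q/q^*$ in $g$ both degenerate as $q\nearrow q^*$ and the minimum value $g(t_q)$ is of lower order than $t_q$ itself; the cleanest route is the rescaled-function argument, setting $h_q(s)=g(t_q s)/|g(t_q)|$ and checking $h_q\to h_\infty$ locally uniformly on $(0,\infty)$ with $h_\infty$ strictly convex and uniquely minimized at $s=1$, so that $h_q(s_q)\to\min h_\infty$ forces $s_q\to1$.
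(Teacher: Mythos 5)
Your proof is correct, and for the main assertion $\int_{\R^N}|\nabla u_q^2|^2dx\approx t_q$ it is essentially the paper's argument in a different dress: the paper extracts a subsequence with $\int|\nabla u_q^2|^2/t_q\to\gamma\neq1$ and computes $\lim_q g(\gamma t_q)/g(t_q)=\gamma(1-\ln\gamma)<1$ to contradict Lemma \ref{le4.1}, which is exactly your rescaled limit $h_q(s)=g(t_qs)/|g(t_q)|\to s\ln s-s$ evaluated at $s=\gamma$. (To rule out $s_q\to0$ or $s_q\to\infty$ from only locally uniform convergence, you should add the one-line observation that each $h_q$ is convex with minimum at $s=1$, so $\inf_{|s-1|\ge\delta}h_q=\min\{h_q(1-\delta),h_q(1+\delta)\}$; the paper is similarly terse on this point.) Where you genuinely diverge is (\ref{3.11}): the paper runs a second contradiction argument, assuming $\int|\nabla u_q|^2\ge\frac{\beta}{2}\int|\nabla u_q^2|^2$ and introducing the modified function $\tilde g(s)=\frac{1+\beta}{4}s-\frac{a}{(q+2)\lambda_qa_q}s^{q/q^*}$, whose minimum is of strictly smaller order than $|g(t_q)|$ because $(1+\beta)^{-q^*/(q^*-q)}\to0$. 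You instead read $\frac12\int|\nabla u_q|^2\le \tilde d_a(q)-g(t)\le \tilde d_a(q)-g(t_q)=o(|g(t_q)|)$ directly off the slack in the chain $\tilde d_a(q)\ge\frac12\int|\nabla u_q|^2+g(t)\ge g(t_q)$; this is shorter, avoids the auxiliary $\tilde g$, and in fact yields the stronger conclusion $\int|\nabla u_q|^2=o\big((q^*-q)t_q\big)$, after which (\ref{3.11}) and then $\frac{4a}{q+2}\int u_q^{q+2}\approx t_q$ follow by solving the energy identity as you indicate. Your later hedge and alternative bound for (\ref{3.11}) are unnecessary (and contain a harmless factor-of-two slip); the first derivation already suffices. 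Two trivial corrections: $\tilde E_q^a$ carries no potential term, so drop $\frac12\int V(x)u_q^2dx$ from the energy identity you solve, and note that $g$ need not be globally convex --- it is the convexity of the normalized $h_q$ (equivalently of $s\mapsto qs-q^*s^{q/q^*}$) that you actually use.
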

\begin{proof}
From (\ref{3.1}), we have
\begin{equation}\label{3.12}
\begin{split}
\tilde d_a(q)=\tilde{E}_q^a(u_q)&\geq \frac{1}{4}\int_{\mathbb{R}^N} |\nabla
u_q^2|^2dx-\frac{a}{(q+2)\lambda_q a_q}\left(\int_{\mathbb{R}^N} |\nabla
u_q^2|^2dx\right)^{\frac{q}{q^*}}\\
&:=g(t)=\frac{1}{4}t-\frac{a}{(q+2)\lambda_q a_q}t^{\frac{q}{q^*}} \text{ with } t=\int_{\mathbb{R}^N} |\nabla
u_q^2|^2dx.
\end{split}
\end{equation}
We first prove that
\begin{equation}\label{eq4.7}
\int_{\mathbb{R}^N}|\nabla u_q^2|^2dx\approx \left(\frac{4aq}{q^*\lambda_q a_q(q+2)}\right)^{\frac{q^*}{q*-q}}=t_q \ \text{ as }\ \nearrow q^*.
\end{equation}
For otherwise, if it is  false, then in subsequence sense there holds that
$$\lim_{q\nearrow q^*}\frac{\int_{\mathbb{R}^N}|\nabla u_q^2|^2dx}{t_q}=\gamma \in[0,1)\cup (1, +\infty).$$
If $\gamma\in[0,1)$, then,
\begin{align}
\lim_{q\nearrow q^*}\frac{g(\gamma t_q)}{g(t_q)}&=\lim_{q\nearrow q^*}\frac{\gamma t_q-\frac{4a}{(q+2)\lambda_q a_q}(\gamma t_q)^{\frac{q}{q^*}}}{t_q-\frac{4a}{(q+2)\lambda_q a_q}t_q^{\frac{q}{q^*}}}=\lim_{q\nearrow q^*}\frac{q^*\gamma^{\frac{q}{q^*}}-q\gamma}{q^*-q}\nonumber\\
&=\gamma(-\ln \gamma+1)\in [0,1).  \label{eq4.8}
\end{align}
Let $\delta:=\gamma(-\ln \gamma+1)\in [0,1)$, we then obtain from (\ref{3.12}) and(\ref{eq4.8}) that
$$ \tilde{d}_a(q)\geq \big(1+o(1)\big)g(\gamma t_q)\geq \frac{1+\delta}{2} g(t_q)=-\frac{1+\delta}{2}\cdot\frac{q^*-q}{4q}\left(\frac{4aq}{q^*\lambda_q a_q(q+2)}\right)^{\frac{q^*}{q*-q}}.$$
This contradicts Lemma 3.1. Similar to the above argument, one can prove also that $\gamma \in(1,+\infty)$ cannot occur. Thus, (\ref{eq4.7}) is proved.


We next try to prove (\ref{3.11}).  On the contrary, if it is false, then  $\exists\ \beta>0$, such that   $\int_{\mathbb{R}^N}|\nabla u_q|^2dx\geq \frac{\beta}{2}\int_{\mathbb{R}^N}|\nabla u_q^2|^2dx$. Therefore,
\begin{equation}\label{eq4.155}
\begin{split}
0\geq\tilde{d}_a(q)&=\frac{1}{2}\int_{\mathbb{R}^N} |\nabla
u_q|^2dx+\frac{1}{4}\int_{\mathbb{R}^N} |\nabla
u_q^2|^2dx-\frac{a}{q+2}\int_{\mathbb{R}^N} |u_q|^{q+2}dx\\
&\geq \frac{1+\beta}{4}\int_{\mathbb{R}^N} |\nabla
u_q^2|^2dx-\frac{a}{(q+2)\lambda_q a_q}\left(\int_{\mathbb{R}^N} |\nabla
u_q^2|^2dx\right)^{\frac{q}{q^*}}\\
&\triangleq \tilde{g}(s),
\end{split}
\end{equation}
by setting $\tilde{g}(s)=\frac{1+\beta}{4}s-\frac{a}{(q+2)\lambda_q
a_q}s^{\frac{q}{q^*}}$ with $s=\int_{\mathbb{R}^N} |\nabla
u_q^2|^2dx$. One can easily check that $\tilde{g}(s)\geq
\tilde{g}(s_q)$ with $s_q=\left(\frac{4aq}{q^*\lambda_q
a_q(q+2)(1+\beta)}\right)^{\frac{q^*}{q*-q}}$ and
$$\tilde{g}(s_q)=-\frac{(1+\beta)(q^*-q)}{4q}\left(\frac{4aq}{q^*\lambda_q a_q(q+2)(1+\beta)}\right)^{\frac{q^*}{q*-q}}\triangleq A.$$
However
$$\frac{A}{-\frac{(q^*-q)}{4q}\left(\frac{4aq}{q^*\lambda_q a_q(q+2)}\right)^{\frac{q^*}{q*-q}}}=(1+\beta)\left(\frac{1}{1+\beta}\right)^{\frac{q^*}{q*-q}}\rightarrow 0 \ \text{ as }q\nearrow q^*.$$
This together with (\ref{eq4.155}) indicates that
$$\frac{\tilde{d}_a(q)}{-\frac{(q^*-q)}{4q}\left(\frac{4aq}{q^*\lambda_q a_q(q+2)}\right)^{\frac{q^*}{q*-q}}}\rightarrow 0\ \text{ as }q\nearrow q^*,$$
 which  contradicts Lemma 3.1, and (\ref{3.11}) is proved.

Finally, taking
$$\frac{\tilde{d}_a(q)}{t_q}=\frac{1}{2t_q}\int_{\mathbb{R}^N} |\nabla
u_q|^2dx+\frac{1}{4t_q}\int_{\mathbb{R}^N} |\nabla
u_q^2|^2dx-\frac{a}{(q+2)t_q}\int_{\mathbb{R}^N} |u_q|^{q+2}dx.$$
we then obtain from (\ref{3.11}) and (\ref{eq4.7}) that
$$\frac{4a}{(q+2)t_q}\int_{\mathbb{R}^N} |u_q|^{q+2}dx\rightarrow 1\ \text{ as }q\nearrow q^*.$$
This gives  (\ref{3.10}).  The proof of this lemma is finished.
\end{proof}

Applying Lemmas \ref{le4.1} and \ref{le4.2}, we end this subsection by proving Theorem \ref{th4.1}.\\

\noindent\textbf{Proof of Theorem \ref{th4.1}: }
Set
$$\varepsilon_q=t_q^{-\frac{1}{N+2}}=t_q^{-\frac{2}{Nq^*}} \text{ with } t_q \text{ given by  (\ref{eq4.2})},$$
 it follows from (\ref{eq4.22}) that $\lim_{q\nearrow q^*}\varepsilon_q=0$. Let $u_q$ be a nonnegative minimizer of $\tilde{d}_a(q)$ and define
$$\tilde{w}_q=\varepsilon_q^{\frac{N}{2}}u_q(\varepsilon_qx).$$

From Lemma \ref{le4.2}, we have
\begin{equation}\label{eq4.15}
\int_{\mathbb{R}^N} |\nabla
\tilde{w}_q^2|^2dx=\varepsilon_q^{N+2}\int_{\mathbb{R}^N} |\nabla
u_q^2|^2dx\approx t_q^{-1}t_q=1,
\end{equation}
\begin{equation}\label{3.13}
\int_{\mathbb{R}^N}
|\tilde{w}_q|^{q+2}dx=\varepsilon_q^{\frac{qN}{2}}\int_{\mathbb{R}^N}
|u_q|^{q+2}dx\approx\frac{q^*+2}{4a_{q^*}},
\end{equation}
and
\begin{equation}\label{eq4.16}
\int_{\mathbb{R}^N} |\nabla
\tilde{w}_q|^2dx=\varepsilon_q^2\int_{\mathbb{R}^N} |\nabla
u_q|^2dx=o(1)\varepsilon_q^{-N}.
\end{equation}
Since $u_q$ is a minimizer of $\tilde{d}_a(q)$, then there exists  $\mu_q\in \mathbb{R}$, such that
\begin{equation}\label{3.14}
-\triangle u_q-\triangle (u_q^2)u_q=\mu_qu_q +a|u_q|^{q}u_q.
\end{equation}
Therefore,
\begin{align*}
\mu_q&=\int_{\mathbb{R}^N} |\nabla
u_q|^2dx+\int_{\mathbb{R}^N} |\nabla
u_q^2|^2dx-a\int_{\mathbb{R}^N} |u_q|^{q+2}dx\\
&=4\tilde{d}_a(q)-\int_{\mathbb{R}^N} |\nabla
u_q|^2dx+\frac{a(2-q)}{q+2}\int_{\mathbb{R}^N} |u_q|^{q+2}dx.
\end{align*}
From Lemmas \ref{le4.1} and \ref{le4.2}, we get that
\begin{align}\label{3.15}
\mu_q\varepsilon_q^{N+2}=\mu_qt_q^{-1}=-\frac{q^*-q}{4q}+o(1)+\frac{2-q}{4}(1+o(1))\rightarrow-\frac{1}{N} \ \text{ as }\ q\nearrow q^*.
\end{align}

Using (\ref{3.13}) and \cite[Lemma I.1]{lions1}, we see that there exists $\{y_{\varepsilon_q}\}\subset \mathbb{R}^N$ and $R, \eta>0$, s.t.
\begin{equation*}
\liminf_{q\nearrow q^*}\int_{B_R(y_{\varepsilon_q})}
|\tilde{w}_q|^{2}dx>\eta>0.
\end{equation*}
Let \begin{equation}\label{eq4.18}w_q=\tilde{w}_q(x+y_{\varepsilon_q})=\varepsilon_q^{\frac{N}{2}}u_q(\varepsilon_q
x+\varepsilon_qy_{\varepsilon_q}),\end{equation}
 then,
\begin{equation}\label{3.16}
\liminf_{q\nearrow q^*}\int_{B_R(0)} |w_q|^{2}dx>\eta>0.
\end{equation}
From (\ref{3.14}), we see that $w_q(x)$ satisfies
\begin{equation}\label{3.17}
-\varepsilon_q^N\triangle w_q-\triangle
(w_q^2)w_q=\mu_q\varepsilon_q^{N+2}w_q
+a\varepsilon_q^{N+2-\frac{Nq}{2}}w_q^{q+1}.
\end{equation}
Note that $N+2=\frac{Nq^*}{2}$, we can deduce that
$$
\varepsilon_q^{N+2-\frac{Nq}{2}}=\varepsilon_q^{\frac{N(q^*q)}{2}}=t_q^{-\frac{q^*-q}{q^*}}\\
=\left(\frac{4a
q}{q^*\lambda_qa_q(q+2)}\right)^{-1}=\frac{q^*\lambda_qa_q(q+2)}{4a
q}.
$$
Consequently,
\begin{equation}\label{3.18}
\lim_{q\nearrow q^*}a\varepsilon_q^{N+2-\frac{Nq}{2}}=\lim_{q\nearrow q^*}\frac{q^*\lambda_qa_q}{q}= a_{q^*}.
\end{equation}
 Moreover, for any $\varphi \in C_c^\infty(\mathbb{R}^N)$, we deduce from (\ref{eq4.16}) and (\ref{eq4.18}) that
\begin{align*}
\left|\varepsilon_q^N\int_{\mathbb{R}^N}\nabla w_q\nabla \varphi dx\right|&\leq C\varepsilon_q^N\left(\int_{\mathbb{R}^N}|\nabla w_q|^2dx\right)^{\frac{1}{2}}= o(1)\varepsilon_q^{\frac{N}{2}}\to 0 \ \text{ as } q\nearrow q^*.
\end{align*}
 By passing to subsequence, it then follows from (\ref{3.15})-(\ref{3.18}) that
 $$w_q^2\rightharpoonup w_0^2 \ \text{ in }\ \mathcal {D}^{2,1} (\R^N)\ \text{ as }\ q\nearrow q^*,$$ where $0\leq  w_0\not\equiv0$  satisfies
\begin{equation*}
-\triangle (w_0^2)w_0=-\frac{1}{N}w_0+a_{q^*}w_0^{3+\frac{4}{N}},
\end{equation*}
i.e.
\begin{equation}\label{3.19}
-\triangle
(w_0^2)=-\frac{1}{N}+a_{q^*}\left(w_0^2\right)^{1+\frac{2}{N}}.
\end{equation}
Using classical  Pohozaev identities, we obtain that
\begin{equation*}
         \int_{\mathbb{R}^N}|\nabla w_0^2| ^2dx= \int_{\mathbb{R}^N}w_0^2dx \text{ and } \int_{\mathbb{R}^N}\left(w_0^2\right)^{\frac{q^*+2}{2}}dx=\frac{N+1}{Na_{q^*}}\int_{\mathbb{R}^N}w_0^2dx.
                          \end{equation*}
%
Recalling the  Gagliardo-Nirenberg inequality (\ref{2.2}), we then have
  \begin{equation}\label{eq4.25}
  \frac{Na_{q^*}}{N+1}\leq \frac{\int_{\mathbb{R}^N}|\nabla w_0^2| ^2dx\left(\int_{\mathbb{R}^N}w_0^2dx\right)^{\frac{2}{N}}}{\int_{\mathbb{R}^N}\left(w_0^2\right)^{2+\frac{2}{N}}}= \frac{Na_{q^*}}{N+1}\left(\int_{\mathbb{R}^N}w_0^2dx\right)^{\frac{2}{N}}.
  \end{equation}
This indicates that
$$\int_{\mathbb{R}^N}w_0^2dx\geq 1.$$
On the other hand, there always holds that
$$\int_{\mathbb{R}^N}w_0^2dx\leq \liminf_{q\nearrow q^*}\int_{\mathbb{R}^N}w_q^2dx=1.$$
Consequently, we have
\begin{equation}\label{eq4.26}\int_{\mathbb{R}^N}w_0^2dx= 1,\end{equation}
and thus
$$w_q\rightarrow w_0\ \text{ in } \ L^2(\R^N) \  \text{ as } \ q\nearrow q^*.$$
It then follows  from (\ref{eq4.15}), (\ref{3.17}) and (\ref{3.19}) that
\begin{equation}\label{eq4.27}\liminf_{q\nearrow q^*}\int_{\mathbb{R}^N}|\nabla w_q^2| ^2dx=\int_{\mathbb{R}^N}|\nabla w_0^2| ^2dx=1.\end{equation}
This means that $$w_q^2\rightarrow w_0^2 \text{ in }\mathcal {D}^{2,1}(\mathbb{R}^N).$$
Moreover, it follows from (\ref{eq4.25}) and (\ref{eq4.26}) that $w_0^2\geq0$ is an optimizer of (\ref{2.2}), thus it must be of the form
$$w_0^2=\frac{\lambda^N}{|v_{q^*}|_{L^1}}v_{q^*}\Big(\lambda |x-x_0|\Big),$$
where $\lambda=\left(\frac{|v_{q^*}|_{L^1}}{N}\right)^\frac{1}{N+2}$ follows from (\ref{eq4.27}).  This completes the proof of (\ref{eq4.3}).

Now, it remains to prove (\ref{eq4.4}) to complete the proof of Theorem \ref{th4.1}. Indeed, from (\ref{3.17}) and (\ref{3.18}) we see that
\begin{equation*}
-\triangle
(w_q^2)\leq c(x)w_q^2 \ \text{ with } c(x)=2a_{q^*}w_q^{q-2}.
\end{equation*}
Similar to the proof of \cite[Theorem 1.1]{gzz}, one can use
 DeGiorgi-Nash-Moser theory  as well as the comparison principle to deduce that there exists $C,\beta,R>0$  independent of $q$, such that
\begin{equation*}
w_q^2(x)\leq C e^{-\beta |x|} \text{ for any } |x|>R \ \text{ as }\ q\nearrow q^*.
\end{equation*}
This gives (\ref{eq4.4}) by taking $\mu=\frac{\beta}{2}$.
\qed

\subsection{Proof of Theorem \ref{th1.3}.}\label{S4.2}
This subsection is devoted to proving Theorem \ref{th1.3} on the blow-up behavior of minimizers for (\ref{e1.1}) as $q\nearrow q^*$. We first give  precise  energy estimates of $d_{a}(q)$ in the following lemma.
%
\begin{lemma}\label{l4.1}
Let $a>a_{q^*}$ be fixed and $\bar u_q(x)$ be a nonnegative minimizer of $d_a(q)$. Then,
\begin{equation}\label{4.2}
0\leq d_a(q)-\tilde{d}_a(q)\rightarrow 0\ \text{ as }\ q\nearrow q^*,
\end{equation}
and
\begin{equation}\label{4.3}
\int_{\mathbb{R}^N}V(x)\bar{u}_q^2dx\rightarrow 0\ \text{ as }\ q\nearrow q^*.
\end{equation}
\end{lemma}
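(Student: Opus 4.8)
The plan is to reduce both assertions to the blow‑up analysis already established in Theorem \ref{th4.1}, exploiting two elementary facts: $\tilde E_q^a$ is invariant under translations, and $E_q^a(u)=\tilde E_q^a(u)+\tfrac12\int_{\mathbb{R}^N}V(x)|u|^2\,dx$ with $V\ge0$. The lower bound in \eqref{4.2} is then immediate: for every $u\in M$ one has $E_q^a(u)\ge\tilde E_q^a(u)\ge\tilde d_a(q)$, hence $d_a(q)\ge\tilde d_a(q)$. Moreover, once \eqref{4.2} is known, \eqref{4.3} follows at once, since a nonnegative minimizer $\bar u_q\in M$ of $d_a(q)$ also tests $\tilde d_a(q)$, giving
\begin{equation*}
0\le\frac12\int_{\mathbb{R}^N}V(x)\bar u_q^2\,dx=E_q^a(\bar u_q)-\tilde E_q^a(\bar u_q)\le d_a(q)-\tilde d_a(q)\to0\quad\text{as }q\nearrow q^*.
\end{equation*}
So the whole task is the upper bound $d_a(q)\le\tilde d_a(q)+o(1)$.

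For that, the idea is to transplant the \emph{genuine} minimizer of $\tilde d_a(q)$ to a minimum point of the potential. Let $u_q$ be a nonnegative minimizer of $\tilde d_a(q)$, and let $\varepsilon_q$, $y_{\varepsilon_q}$ and $w_q=\varepsilon_q^{N/2}u_q(\varepsilon_q x+\varepsilon_q y_{\varepsilon_q})$ be as in Theorem \ref{th4.1}, so that $\varepsilon_q\to0^+$, $w_q^2\to w_0^2$ in $\mathcal{D}^{2,1}(\mathbb{R}^N)$ with $\int_{\mathbb{R}^N}w_0^2\,dx=1$, and the uniform decay estimate \eqref{eq4.4} holds. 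Fix $x_0\in A=\{V=0\}$ and set $\phi_q(x):=u_q(x+\varepsilon_q y_{\varepsilon_q}-x_0)$. Then $\phi_q\in M$ (it is a translate of $u_q$, of unit $L^2$‑norm, and lies in $X$ since $u_q$ does and, by \eqref{eq4.4}, $\phi_q$ decays exponentially), while $\tilde E_q^a(\phi_q)=\tilde E_q^a(u_q)=\tilde d_a(q)$ by translation invariance. A change of variables in the potential term, in which the shifts $\pm\varepsilon_q y_{\varepsilon_q}$ cancel, gives $\int_{\mathbb{R}^N}V(x)\phi_q^2\,dx=\int_{\mathbb{R}^N}V(x_0+\varepsilon_q z)\,w_q^2(z)\,dz$, hence
\begin{equation*}
d_a(q)\le E_q^a(\phi_q)=\tilde d_a(q)+\frac12\int_{\mathbb{R}^N}V(x_0+\varepsilon_q z)\,w_q^2(z)\,dz .
\end{equation*}

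It remains to show the last integral tends to $0$; this is the technical heart. Split it over $\{|z|\le R_1\}$ and $\{|z|>R_1\}$. On the bounded part $x_0+\varepsilon_q z\to x_0$, so using the local boundedness of $V$ near $x_0$, together with $V(x_0)=0$ and $\int_{\mathbb{R}^N}w_q^2\,dx=1$ (exactly as in the computation leading to \eqref{2.6}), its contribution is $o(1)$; on the outer part \eqref{eq4.4} gives the uniform bound $w_q^2(z)\le C^2e^{-2\mu|z|}$, and since $\varepsilon_q\to0$ the scale $1/\varepsilon_q$ over which $z\mapsto V(x_0+\varepsilon_q z)$ varies eventually dominates any fixed growth of $V$, so this contribution is uniformly small once $R_1$ is large. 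Hence $\int_{\mathbb{R}^N}V(x_0+\varepsilon_q z)w_q^2(z)\,dz\to V(x_0)\int_{\mathbb{R}^N}w_0^2\,dx=0$, so $\limsup_{q\nearrow q^*}(d_a(q)-\tilde d_a(q))\le0$, which with the lower bound proves \eqref{4.2}. The main obstacle is precisely this last convergence: since $d_a(q)$ and $\tilde d_a(q)$ both diverge to $-\infty$, one cannot afford the $O(t_q^{2/(N+2)})$ error coming from the explicit test function of Lemma \ref{le4.1}, so one is forced to use the exact minimizer transplanted with zero energy loss, and then to control the genuinely new term $\int_{\mathbb{R}^N}V(x_0+\varepsilon_q z)w_q^2(z)\,dz$ — which is where the concentration profile and, for the tails, the uniform exponential decay of Theorem \ref{th4.1} are indispensable.
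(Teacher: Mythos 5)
Your lower bound $d_a(q)\ge\tilde d_a(q)$, the deduction of \eqref{4.3} from \eqref{4.2}, and the overall strategy of transplanting the genuine minimizer of $\tilde d_a(q)$ to a point $x_0$ with $V(x_0)=0$ all agree with the paper. The gap is in your outer-region estimate for $\int_{\mathbb{R}^N}V(x_0+\varepsilon_q z)\,w_q^2(z)\,dz$. Hypothesis \eqref{v} only asks $V\in L^\infty_{\rm loc}$ with $V(x)\to\infty$; it imposes no growth rate at infinity. Exponential decay of $w_q$ therefore does not "dominate any fixed growth of $V$": for $V(y)=e^{|y|^2}$, say, the integrand $V(x_0+\varepsilon_q z)e^{-2\mu|z|}$ tends to $+\infty$ as $|z|\to\infty$ for every fixed $q$, and the tail integral diverges. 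Worse, your translated function $\phi_q(x)=u_q(x+\varepsilon_q y_{\varepsilon_q}-x_0)$ then fails to satisfy $\int V\phi_q^2<\infty$, so it is not in $H$, hence not in $M$, and $E_q^a(\phi_q)=+\infty$ yields no upper bound at all.

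This is precisely why the paper does not use a pure translation: it multiplies the transplanted minimizer by a fixed cut-off $\varphi(x-x_0)$ supported in $B_1(x_0)$ and renormalizes by a constant $A_q\ge1$ to restore $\|\cdot\|_{L^2}=1$. The resulting test function has compact support, so the potential term is finite and, since $V$ is bounded on $B_1(x_0)$, converges to $V(x_0)=0$ by dominated convergence; the errors introduced by the cut-off and by $A_q$ in the kinetic and nonlinear terms are $O(e^{-\mu/\varepsilon_q})$ thanks to the uniform decay \eqref{eq4.4}, which is exactly where that decay is really needed. Your argument can be repaired either by inserting such a cut-off or by adding a growth restriction on $V$ (e.g.\ at most polynomial), but as written the tail step, and with it the admissibility of your test function, fails under the stated hypotheses.
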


\begin{proof}
Let $\varphi(x)$ be a cut-off function such that $\varphi(x)\equiv1$ if $|x|<1$ and $\varphi(x)\equiv0$ if $|x|>1$. As in Subsection \ref{S4.1}, we still denote $u_q$ to be a nonnegative minimizer of $\tilde d_a(q)$ and let $w_q$ be  given by (\ref{eq4.30}).  For any $x_0\in\R^N$, we set
\begin{equation*}
\tilde  u_q(x)=A_q\varphi(x-x_0)\varepsilon_q^{-\frac{N}{2}}w_q\big(\frac{x-x_0}{\varepsilon_q}\big)=A_q\varphi(x-x_0)u_q(x-x_0+\varepsilon_q y_{\varepsilon_q}),
\end{equation*}
where $A_q\geq1$ such that $\int_{\R^N}\tilde u_q^2dx\equiv1$. Using the exponential decay of $w_q$ in (\ref{eq4.4}), we have

\begin{equation}
0\leq A_q^2-1=\frac{\int_{|x|\geq1}\varphi(\varepsilon_q x)w_q^2(x)dx}{\int_{\R^N}\varphi(\varepsilon_q x)w_q^2(x)dx}\leq C e^{-\frac{\mu}{\varepsilon_q} } \ \text{ as } q\nearrow q^*,
\end{equation}
\begin{align}\label{4.4}
\int_{\mathbb{R}^N}V(x)\tilde u_q^2(x)dx&=A_q^2\int_{\mathbb{R}^N}V(\varepsilon
x+x_0)\varphi (\varepsilon_q x)w_q^2dx\nonumber\\
&\rightarrow V(x_0)\int_{\mathbb{R}^N}w_0^2dx=V(x_0)\ \text{ as } q\nearrow q^*,
\end{align}
and
\begin{align}
\int_{\R^N}|\tilde u_q|^{q+2}dx&=\varepsilon_q^{-\frac{Nq}{2}}A_q^{q+2}\int_{\R^N}\varphi^{q+2}(\varepsilon_q x)|w_q|^{q+2}dx=\varepsilon_q^{-\frac{Nq}{2}}\int_{\R^N}|w_q|^{q+2}dx+O\big(e^{-\frac{\mu}{\varepsilon_q}} \big)\nonumber\\
&=\int_{\R^N}| u_q|^{q+2}dx+O\big(e^{-\frac{\mu}{\varepsilon_q}} \big)\ \text{ as } q\nearrow q^*.
\end{align}
Similar to the above argument, one can also prove that
\begin{align}
\int_{\R^N}|\nabla\tilde u_q^2|^2dx
&=\int_{\R^N}|\nabla  u_q^2|^2dx+O\big(e^{-\frac{\mu}{\varepsilon_q}} \big)\ \text{ as } q\nearrow q^*,
\end{align}
and
\begin{align}
\int_{\R^N}|\nabla\tilde u_q|^2dx
&=\int_{\R^N}|\nabla  u_q|^2dx+O\big(e^{-\frac{\mu}{\varepsilon_q}} \big)\ \text{ as } q\nearrow q^*.
\end{align}
Therefore, choosing $x_0\in \R^N$ such that $V(x_0)=0$,  we then deduce from the above estimates that
\begin{align*}
0&\leq d_a(q)-\tilde{d}_a(q)\leq E_q^a(\tilde u_q(x))-\tilde{E}_q^a(u_q(x))\\
&= \tilde E_q^a(\tilde u_q(x))-\tilde{E}_q^a(u_q(x))+\frac{1}{2}\int_{\mathbb{R}^N}V(x)\tilde u_q^2(x)dx\\
&=\frac{1}{2}V(x_0) +O\big(e^{-\frac{\mu}{\varepsilon_q}}\big)+o(1)\rightarrow 0 \ \text{ as } q\nearrow q^*.
\end{align*}
Moreover, if $\bar{u}_q$ is a nonnegative minimizer of $d_a(q)$. Then
\begin{equation*}
\int_{\mathbb{R}^N}V(x)\bar{u}_q^2dx=d_a(q)-\tilde{E}_q^a(\bar{u}_q)\leq d_a(q)-\tilde{d}_a(q)\rightarrow 0\ \text{ as } \ q\nearrow q^*.
\end{equation*}

\end{proof}

\noindent\textbf{Proof of Theorem \ref{th1.3}:}
Now we still denote $\bar{u}_q$ be a nonnegative minimizer of $d_a(q)$.  Applying Lemma \ref{l4.1}, one can check that all the conclusions in Lemma \ref{le4.2}  also holds for $\bar{u}_q$, i.e.,
\begin{equation}\label{4.44}
\int_{\mathbb{R}^N}|\nabla \bar{u}_q^2|^2dx\approx \frac{4a}{q+2}\int_{\mathbb{R}^N}\bar{u}_q^{q+2}dx\approx \left(\frac{4aq}{q^*\lambda_q a_q(q+2)}\right)^{\frac{q^*}{q*-q}}=t_q
\end{equation}
and
\begin{equation}\label{4.5}
\frac{\int_{\mathbb{R}^N}|\nabla \bar{u}_q|^2dx}{\int_{\mathbb{R}^N}|\nabla \bar{u}_q^2|^2dx}\rightarrow 0 \ \text{ as } q\nearrow q^*.
\end{equation}
Moreover, similar to  (\ref{3.16}), one can prove that there exists $\{y_{\varepsilon_q}\}\subset\R^N$ such that the scaling
$$\bar w_q(x):= \varepsilon_q^{\frac{N}{2}}\bar u_q(\varepsilon_q
x+\varepsilon_qy_{\varepsilon_q})$$
satisfies
\begin{equation*}
\liminf_{q\nearrow q^*}\int_{B_R(0)} |\bar w_q|^{2}dx>\eta>0.
\end{equation*}
 Then, repeating the proof of Theorem \ref{th4.1}, we  can prove that
$$\bar w_q^2\rightarrow  w^2_0:=\frac{\lambda^N}{|v_{q^*}|_{L^1}}v_{q^*}\Big(\lambda |x-x_0|\Big) \ \text{ in $\mathcal {D}^{2,1}(\mathbb{R}^N)$} \text{ with }\lambda=\left(\frac{|v_{q^*}|_{L^1}}{N}\right)^\frac{1}{N+2}.$$
 Moreover, by (\ref{4.3}) we see that
 \begin{equation*}
\int_{\mathbb{R}^N}V(x)\bar{u}_q^2dx=\int_{\R^N}V(\varepsilon_qx+\varepsilon_qy_{\varepsilon_q})\bar w_q(x)dx\to0 \ \text{ as }\ q\nearrow q^*.
\end{equation*}
This further indicates that the sequence $\{\varepsilon_q y_{\varepsilon_q}\}$ satisfies
$$\varepsilon_q y_{\varepsilon_q}\rightarrow A=\{x:V(x)=0\} \text{ as }q\nearrow q^*.$$
The proof of Theorem \ref{th1.3} is complete.
\qed

\end{document}